\newtheorem{acknowledgement}{Acknowledgement}
\newtheorem{theorem}{Theorem}[section]
\newtheorem{corollary}[theorem]{Corollary}
\newtheorem{lemma}[theorem]{Lemma}
\newtheorem{definition}[theorem]{Definition}
\newtheorem{proposition}[theorem]{Proposition}
\newtheorem{remark}[theorem]{Remark}
\newtheorem{example}[theorem]{Example}
\numberwithin{theorem}{section}
\begin{document}
\title[Scattered locally $C^{\ast }$-algebras]{Scattered locally $C^{\ast }$%
-algebras}
\author{Maria Joi\c{t}a}
\address{Maria Joi\c{t}a \\
Department of Mathematics, Faculty of Applied Sciences, University
Politehnica of Bucharest, 313 Spl. Independentei, 060042, Bucharest, Romania
and Simion Stoilow Institute of Mathematics of the Roumanian Academy, 21
Calea Grivitei Street, 010702 Bucharest, Romania}
\email{mjoita@fmi.unibuc.ro and maria.joita@mathem.pub.ro}
\urladdr{http://sites.google.com/a/g.unibuc.ro/maria-joita/ }
\subjclass[2000]{Primary 46L05, Secondary 46L55,46L85 }
\keywords{locally $C^{\ast }$-algebras, scattered locally $C^{\ast }$%
-algebras, crossed product of locally $C^{\ast }$-algebras. }
\thanks{This paper is in final form and no version of it will be submitted
for publication elsewhere.}

\begin{abstract}
In this paper, we introduce the notion of a scattered locally $C^{\ast }$%
-algebra and we give conditions for a locally $C^{\ast }$-algebra to be
scattered. Given an action $\alpha $ of a locally compact group $G$ on a
scattered locally $C^{\ast }$-algebra $A[\tau _{\Gamma }]$, it is natural to
ask under what conditions the crossed product $A[\tau _{\Gamma }]\times
_{\alpha }G$ is also scattered. We obtain some results concerning this
question.
\end{abstract}

\maketitle

\section{Introduction}

A topological space $X$ is called scattered (or dispersed) if every nonempty
subset of $X$ necessarily contains an isolated point. In \cite[p. 41,
Theorem 6]{R}, W. Rudin showed that the linear functionals on $C(X)$, where $%
X$ is a compact Hausdorff space which is scattered, have a very simple
structure. A compact Hausdorff space $X$ is scattered if and only if every
Radon measure on $X$ is atomic. A. Pelczynski and Z. Semadeni \cite{PS} gave
several necessary and sufficient conditions for a compact Hausdorff space $X$
to be scattered in terms of $C(X)$. They showed that a compact Hausdorff
space $X$ is scattered if and only if every linear functional $f$ on $C(X)$
is of the form 
\begin{equation*}
f(h)=\tsum\limits_{n=1}^{\infty }a_{n}h\left( x_{n}\right) 
\end{equation*}%
where $\left( x_{n}\right) _{n}$ is a fixed sequence of points in $X$ and $%
\tsum\limits_{n=1}^{\infty }\left\vert a_{n}\right\vert <\infty $. As a
non-commutative generalization of a scattered compact Hausdorff space, the
notion of a scattered $C^{\ast }$-algebra was introduced independently by H.
E. Jensen \cite{J1}\textbf{\ }and M. L. Rothwell \cite{Ro}. A $C^{\ast }$%
-algebra $A$ is said to be scattered if every positive functional on $A$ is
atomic \cite[Definition 1.1]{J1}; or equivalently, any positive functional
on $A$ is the sum of a finite or infinite sequence of pure functionals on $A$%
. The reader is referred to \cite{C}, \cite{H}, \cite{J1}, \cite{J2}, \cite%
{L}, \cite{K}, \cite{Ro} for other equivalent conditions on scattered $%
C^{\ast }$-algebras.

The notion of a locally $C^{\ast }$-algebra is a generalization of the
notion of a $C^{\ast }$-algebra, instead to be given by a single $C^{\ast }$%
-norm, the topology on a locally $C^{\ast }$-algebra is defined by a
directed family of $C^{\ast }$-seminorms. A locally $C^{\ast }$-algebra $%
A[\tau _{\Gamma }]$ is a complete Hausdorff topological $\ast $-algebra for
which there exists an upward directed family $\Gamma $ of $C^{\ast }$%
-seminorms $\{p_{\lambda }\}_{\lambda \in \Lambda }$ defining the topology $%
\tau _{\Gamma }$. A Fr\'{e}chet locally $C^{\ast }$-algebra is a locally $%
C^{\ast }$-algebra whose topology is given by a countably family of $C^{\ast
}$-seminorms. A morphism of locally $C^{\ast }$-algebras is a continuous $%
\ast $-morphism $\Phi $ from a locally $C^{\ast }$-algebra $A[\tau _{\Gamma
}]$ to another locally $C^{\ast }$-algebra $B[\tau _{\Gamma ^{\prime }}]$.
Other terms with which locally $C^{\ast }$-algebras can be found in the
literature are: pro-$C^{\ast }$-algebras (N.C. Phillips), $b^{\ast }$%
-algebras (C.\thinspace Apostol) and LMC$^{\ast }$-algebras (G.\thinspace
Lassner, K.\thinspace Schm{\"{u}}dgen).

Let $\{A_{\lambda };\chi _{\lambda \mu }\}_{\lambda ,\mu \in \Lambda
,\lambda \geq \mu }$ be an inverse system of $C^{\ast }$-algebras. Then $%
\lim\limits_{\leftarrow \lambda }A_{\lambda }$ with the topology given by
the family of $C^{\ast }$-seminorms $\{p_{A_{\lambda }}\}_{\lambda \in
\Lambda },p_{A_{\lambda }}\left( \left( a_{\mu }\right) _{\mu }\right)
=\left\Vert a_{\lambda }\right\Vert _{A_{\lambda }}$, where $\left\Vert
\cdot \right\Vert _{A_{\lambda }}$ denotes the $C^{\ast }$-norm on $%
A_{\lambda }$, is a locally $C^{\ast }$-algebra.

For a locally $C^{\ast }$-algebra $A[\tau _{\Gamma }]$, and every $\lambda
\in \Lambda $, the quotient normed $\ast $-algebra $A_{\lambda }=A/\ker
p_{\lambda }$, where $\ker p_{\lambda }=\{a\in A;p_{\lambda }(a)=0\}$, is
already complete, hence, it is a $C^{\ast }$-algebra in the norm $||a+\ker
p_{\lambda }||_{A_{\lambda }}$ $=p_{\lambda }(a),a\in A$ (C. Apostol, see
e.g. \cite[Theorem 10.24]{F}). The canonical map from $A$ to $A_{\lambda }$
is denoted by $\pi _{\lambda }^{A}$. For $\lambda ,\mu \in \Lambda $ with $%
\lambda \geq \mu $, there is a canonical surjective $C^{\ast }$-morphism $%
\pi _{\lambda \mu }^{A}:A_{\lambda }\rightarrow A_{\mu }$ such that $\pi
_{\lambda \mu }^{A}\left( a+\ker p_{\lambda }\right) =a+\ker p_{\mu }$ for
all $a\in A$. Moreover, $\{A_{\lambda };\pi _{\lambda \mu }^{A}\}_{\lambda
,\mu \in \Lambda ,\lambda \geq \mu }$ is an inverse system of $C^{\ast }$%
-algebras, called the Arens-Michael decomposition of the locally $C^{\ast }$%
-algebra $A[\tau _{\Gamma }]$. The Arens-Michael decomposition gives us a
representation of $A[\tau _{\Gamma }]$ as an inverse limit of $C^{\ast }$%
-algebras; namely $A[\tau _{\Gamma }]=\lim\limits_{\leftarrow \lambda
}A_{\lambda }$, up to a topological $\ast $-isomorphism.

In this paper, we introduce the notion of scattered locally $C^{\ast }$%
-algebra and we give conditions for locally $C^{\ast }$-algebras to be
scattered. Given an action $\alpha $ of a locally compact group $G$ on a
scattered locally $C^{\ast }$-algebra $A[\tau _{\Gamma }]$, it is natural to
ask under what condition the crossed product $A[\tau _{\Gamma }]\times
_{\alpha }G$ is also scattered. We obtain some results concerning this
question.

\section{Scattered locally $C^{\ast }$-algebras}

Let $A[\tau _{\Gamma }]$ be a locally $C^{\ast }$-algebra. A continuous
positive functional on $A[\tau _{\Gamma }]\ $is a continuous linear map $%
f:A\rightarrow \mathbb{C}$ with the property that $f\left( a^{\ast }a\right)
\geq 0$ for all $a\in A$. If $f_{\lambda }$ is a positive functional on $%
A_{\lambda }$, then $f_{\lambda }\circ \pi _{\lambda }^{A}$ is a continuous
positive functional on $A[\tau _{\Gamma }]$. Moreover, for any continuous
positive functional $f$ on $A[\tau _{\Gamma }]$, there are $\lambda \in
\Lambda $ and a positive functional $f_{\lambda }$ on $A_{\lambda }$,$\ $%
called the positive functional associated to $f$, such that $f=f_{\lambda
}\circ \pi _{\lambda }^{A}$. A continuous positive functional $f$ on $A[\tau
_{\Gamma }]\ $is pure if $f\neq 0$ and if $g$ is another positive functional
on $A[\tau _{\Gamma }]$ and $g\leq f$, then there is $\alpha \in \lbrack
0,1] $ such that $g=\alpha f$. A continuous positive functional $f$ on $%
A[\tau _{\Gamma }]\ $is pure if and only if its associated positive
functional $f_{\lambda }$ is pure.

\begin{definition}
A locally $C^{\ast }$-algebra $A[\tau _{\Gamma }]$ is scattered if any
continuous positive functional $f$ on $A[\tau _{\Gamma }]$ is a countable
sum $f=\tsum\limits_{n}f_{n}$ of pure functionals $f_{n}$ on $A$, in the
pointwise convergence.
\end{definition}

\begin{remark}
Let $A[\tau _{\Gamma }]$ and $B[\tau _{\Gamma ^{\prime }}]\ $be two
isomorphic locally $C^{\ast }$-algebras. Then $A[\tau _{\Gamma }]$ is
scattered if and only if $B[\tau _{\Gamma ^{\prime }}]$ is scattered.
\end{remark}

\begin{proposition}
\label{scatt} Let $A[\tau _{\Gamma }]$ be a locally $C^{\ast }$-algebra.
Then $A[\tau _{\Gamma }]$ is scattered if and only if the factors $%
A_{\lambda },\lambda \in \Lambda $ in the Arens-Michael decomposition of $%
A[\tau _{\Gamma }],$ are scattered.
\end{proposition}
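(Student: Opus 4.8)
The plan is to prove both implications by exploiting the bijective correspondence (recalled just before the Definition) between continuous positive functionals $f$ on $A[\tau_\Gamma]$ and positive functionals $f_\lambda$ on the $C^\ast$-algebras $A_\lambda$ via $f = f_\lambda \circ \pi_\lambda^A$, together with the stated fact that $f$ is pure if and only if its associated functional $f_\lambda$ is pure. The key observation that makes this work is that the decomposition $f = \sum_n f_n$ "lives at a single level'': if $f = f_\lambda \circ \pi_\lambda^A$, then for each $n$ the functional $f_n$ satisfies $f_n \le f$ (as partial sums are dominated by $f$), hence $f_n$ vanishes on $\ker p_\lambda$ and therefore $f_n = g_n \circ \pi_\lambda^A$ for a unique positive functional $g_n$ on $A_\lambda$; purity of $f_n$ gives purity of $g_n$. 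Conversely, any positive functional $g_n$ on $A_\lambda$ pulls back to a continuous positive functional on $A[\tau_\Gamma]$, preserving purity. This lets me pass decompositions back and forth between $A[\tau_\Gamma]$ and a fixed $A_\lambda$.

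For the forward direction, suppose $A[\tau_\Gamma]$ is scattered and fix $\lambda \in \Lambda$; I want $A_\lambda$ scattered. Take a positive functional $g$ on $A_\lambda$. Then $f := g \circ \pi_\lambda^A$ is a continuous positive functional on $A[\tau_\Gamma]$, so by hypothesis $f = \sum_n f_n$ pointwise with each $f_n$ pure on $A$. As noted above, each $f_n = g_n \circ \pi_\lambda^A$ with $g_n$ pure on $A_\lambda$. Evaluating $f = \sum_n f_n$ on elements of the form $\pi_\lambda^A(a)$ and using surjectivity of $\pi_\lambda^A$ onto $A_\lambda$, I get $g = \sum_n g_n$ pointwise on $A_\lambda$, exhibiting $g$ as a countable sum of pure functionals; hence $A_\lambda$ is scattered.

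For the converse, suppose every $A_\lambda$ is scattered and let $f$ be a continuous positive functional on $A[\tau_\Gamma]$. Choose $\lambda$ and $f_\lambda$ positive on $A_\lambda$ with $f = f_\lambda \circ \pi_\lambda^A$. Since $A_\lambda$ is scattered, $f_\lambda = \sum_n g_n$ pointwise with each $g_n$ pure on $A_\lambda$. Composing with $\pi_\lambda^A$ and using that $\pi_\lambda^A$ is a fixed continuous map, I obtain $f(a) = \sum_n g_n(\pi_\lambda^A(a))$ for every $a \in A$; setting $f_n := g_n \circ \pi_\lambda^A$, each $f_n$ is a pure continuous positive functional on $A[\tau_\Gamma]$ and $f = \sum_n f_n$ pointwise. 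Thus $A[\tau_\Gamma]$ is scattered.

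The main obstacle to watch is the justification that each summand $f_n$ really does factor through $\pi_\lambda^A$, i.e. that the whole decomposition is supported at level $\lambda$; this rests on the fact that in a pointwise-convergent sum of positive functionals each partial sum, and hence each $f_n$, is dominated by $f$, so $\ker p_\lambda \subseteq \ker f$ forces $\ker p_\lambda \subseteq \ker f_n$ (using $|f_n(a)|^2 \le f_n(1)\, f_n(a^\ast a)$ type estimates, or the corresponding statement for approximate units in the non-unital case). Once this "same-level'' phenomenon is pinned down, the rest is a routine transport of the definition across the canonical surjection $\pi_\lambda^A$, and the pointwise nature of the convergence means no uniformity across $\lambda$ is needed.
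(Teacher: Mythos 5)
Your proposal is correct and follows essentially the same route as the paper: both directions hinge on the same key point, namely that each pure summand $f_{n}$ is dominated by $f=f_{\lambda }\circ \pi _{\lambda }^{A}$, so a Cauchy--Schwarz estimate shows $f_{n}$ is $p_{\lambda }$-continuous and hence factors through $A_{\lambda }$ with purity preserved, while the converse is the identical pullback of the decomposition along $\pi _{\lambda }^{A}$. No substantive differences from the paper's argument.
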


\begin{proof}
First we suppose that $A[\tau _{\Gamma }]$ is scattered. Let $\lambda \in
\Lambda $ and let $f$ $\ $be a positive functional on $A_{\lambda }$. Then $%
f\circ \pi _{\lambda }^{A}$ is a continuous positive functional on $A[\tau
_{\Gamma }]$, and since $A[\tau _{\Gamma }]$ is scattered, $f\circ \pi
_{\lambda }^{A}=\tsum\limits_{n}f_{n}$, where $f_{n},$ $n\in \mathbb{N},$
are pure. Let $n$ be a positive integer. Then, there are $\mu \in \Lambda $
and $f_{\mu ,n}$ a positive functional on $A_{\mu }$ such that $f_{n}=f_{\mu
,n}$ $\circ \pi _{\mu }^{A}$. Moreover,%
\begin{equation*}
\left\vert f_{n}\left( a\right) \right\vert ^{2}\leq \left\Vert f_{\mu
,n}\right\Vert f_{n}\left( a^{\ast }a\right) \leq \left\Vert f_{\mu
,n}\right\Vert f\left( \pi _{\lambda }^{A}\left( a^{\ast }a\right) \right)
\leq \left\Vert f_{\mu ,n}\right\Vert \left\Vert f\right\Vert p_{\lambda
}\left( a\right) ^{2}
\end{equation*}%
for all $a\in A[\tau _{\Gamma }]$.$\ $Therefore, for each positive integer $%
n $, there is a positive functional $f_{n}^{\lambda }$ on $A_{\lambda }$
such that $f_{n}^{\lambda }\circ \pi _{\lambda }^{A}=f_{n}$ . Moreover,
since $f_{n}$ is pure, \ $f_{n}^{\lambda }$ is pure and $f=\tsum%
\limits_{n}f_{n}^{\lambda }$.

Conversely, suppose that $f\ $is a continuous positive functional on $A[\tau
_{\Gamma }]$. Then there are $\lambda \in \Lambda $ and a positive
functional $f_{\lambda }$ on $A_{\lambda }$ such that $f=f_{\lambda }\circ
\pi _{\lambda }^{A}$. Since $A_{\lambda }\ $is scattered,\ $f_{\lambda
}=\tsum\limits_{n}f_{n}$, where $f_{n}$ are pure. Then 
\begin{equation*}
f=f_{\lambda }\circ \pi _{\lambda }^{A}=\tsum\limits_{n}f_{n}\circ \pi
_{\lambda }^{A}
\end{equation*}%
and since for each positive integer $n,$ $f_{n}\circ \pi _{\lambda }^{A}$ is
pure, $A[\tau _{\Gamma }]$ is scattered.
\end{proof}

\begin{corollary}
\label{sub} Any closed $\ast $- subalgebra of a scattered locally $C^{\ast }$%
-algebra is a scattered locally $C^{\ast }$-algebra.
\end{corollary}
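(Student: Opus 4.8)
The plan is to reduce the statement, via Proposition \ref{scatt}, to the already-known fact that a $C^{\ast}$-subalgebra of a scattered $C^{\ast}$-algebra is scattered (this is part of the standard theory of scattered $C^{\ast}$-algebras, e.g. \cite{J1}, \cite{Ro}). Let $B[\tau_{\Gamma'}]$ be a closed $\ast$-subalgebra of a scattered locally $C^{\ast}$-algebra $A[\tau_{\Gamma}]$, where $\tau_{\Gamma'}$ is the subspace topology, defined by the restrictions $q_{\lambda}=p_{\lambda}|_{B}$ of the seminorms generating $\tau_{\Gamma}$. First I would identify, for each $\lambda\in\Lambda$, the factor $B_{\lambda}=B/\ker q_{\lambda}$ in the Arens--Michael decomposition of $B$ with a $C^{\ast}$-subalgebra of the factor $A_{\lambda}=A/\ker p_{\lambda}$: the canonical $\ast$-morphism $B_{\lambda}\to A_{\lambda}$ sending $b+\ker q_{\lambda}\mapsto b+\ker p_{\lambda}$ is well defined and isometric (since $q_{\lambda}(b)=p_{\lambda}(b)$ for $b\in B$), hence it is an injective $C^{\ast}$-morphism, and its range is $\pi_{\lambda}^{A}(B)$, which is closed in $A_{\lambda}$ because $\ast$-morphisms between $C^{\ast}$-algebras have closed range. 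Thus $B_{\lambda}$ is $\ast$-isomorphic to a $C^{\ast}$-subalgebra of $A_{\lambda}$.

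Next I would invoke Proposition \ref{scatt}: since $A[\tau_{\Gamma}]$ is scattered, each $A_{\lambda}$ is a scattered $C^{\ast}$-algebra. A $C^{\ast}$-subalgebra of a scattered $C^{\ast}$-algebra is scattered, so each $B_{\lambda}$ is scattered. Applying Proposition \ref{scatt} in the other direction to $B[\tau_{\Gamma'}]$, whose Arens--Michael factors are exactly the $B_{\lambda}$, we conclude that $B[\tau_{\Gamma'}]$ is a scattered locally $C^{\ast}$-algebra.

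The only genuine point requiring care is the identification of the Arens--Michael decomposition of the closed subalgebra $B$ with the family $\{\pi_{\lambda}^{A}(B)\}_{\lambda}$; one must check that $B$, equipped with the subspace topology, is itself a locally $C^{\ast}$-algebra (it is complete as a closed subspace of the complete space $A$, and the restricted seminorms still form an upward directed family of $C^{\ast}$-seminorms) and that its Arens--Michael factors are the $B_{\lambda}$ above rather than some coarser quotient. Since $q_{\lambda}$ is literally the restriction of $p_{\lambda}$, we have $\ker q_{\lambda}=B\cap\ker p_{\lambda}$, so $B_{\lambda}=B/(B\cap\ker p_{\lambda})$, and this is precisely the image of $B$ under $\pi_{\lambda}^{A}$. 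I expect this bookkeeping to be the main, though entirely routine, obstacle; once it is in place, the result is immediate from Proposition \ref{scatt} and the $C^{\ast}$-algebra case.
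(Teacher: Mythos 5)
Your argument is correct and follows essentially the same route as the paper: identify the Arens--Michael factors $B_{\lambda}$ of the closed subalgebra with $C^{\ast}$-subalgebras of the factors $A_{\lambda}$, invoke the $C^{\ast}$-algebra fact that subalgebras of scattered $C^{\ast}$-algebras are scattered, and apply Proposition \ref{scatt} in both directions. The only cosmetic difference is that the paper identifies $B_{\lambda}$ with the closure $\overline{\pi_{\lambda}^{A}(B)}$ in $A_{\lambda}$, while you observe (correctly, since $B_{\lambda}$ is already a $C^{\ast}$-algebra and the canonical map is isometric) that $\pi_{\lambda}^{A}(B)$ is itself closed, so the two identifications coincide.
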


\begin{proof}
Let $A[\tau _{\Gamma }]$ be a scattered locally $C^{\ast }$-algebra and $B$
a closed $\ast $- subalgebra of $A[\tau _{\Gamma }]$. Then $B$ is a locally $%
C^{\ast }$-algebra and the factors $B_{\lambda },\lambda \in \Lambda ,$ in
the Arens-Michael decomposition of $B$, can be identified with the $C^{\ast
} $-subalgebras $\overline{\pi _{\lambda }^{A}(B)}$, the closure of the $%
\ast $-subalgebra $\pi _{\lambda }^{A}(B)$ in $A_{\lambda }$, of$\
A_{\lambda },\lambda \in \Lambda $ which are scattered $C^{\ast }$-algebras.
Then, $B_{\lambda },\lambda \in \Lambda $, are scattered (see, for example, 
\cite[p. 677]{K1}) and so $B$ is scattered.
\end{proof}

A Hausdorff countably compactly generated topological space is a topological
space $X$ which is the direct limit of a sequence of Hausdorff compact
spaces $\{K_{n}\}_{n}$. The $\ast $-algebra $C(X)$ of all continuous complex
valued functions on $X$ has a structure of a locally $C^{\ast }$-algebra
with respect to the topology given by $C^{\ast }$-seminorms $%
\{p_{K_{n}}\}_{n}$ with $p_{K_{n}}\left( f\right) =\sup \{\left\vert f\left(
x\right) \right\vert ;x\in K_{n}\}$. Moreover, for each $n,C(X)_{n}\ $is
isomorphic to $C(K_{n})$, and for any commutative Fr\'{e}chet locally $%
C^{\ast }$-algebra $A$ there is a Hausdorff countably compactly generated
topological space $X$ such that $A$ is isomorphic with $C(X)$ \cite[Theorem
5.7]{P}.

\begin{corollary}
A commutative Fr\'{e}chet locally $C^{\ast }$-algebra $A[\tau _{\Gamma }]$
is scattered if and only if there is a Hausdorff countably compactly
generated topological space $X$ which is the direct limit of a sequence of
scattered Hausdorff compact spaces $\{K_{n}\}_{n}\ $such that $A[\tau
_{\Gamma }]$ is isomorphic with $C(X).$
\end{corollary}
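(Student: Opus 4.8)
The plan is to assemble three facts: the structure theorem \cite[Theorem 5.7]{P} for commutative Fr\'{e}chet locally $C^{\ast }$-algebras, Proposition \ref{scatt}, and the classical characterization of scattered compact Hausdorff spaces in terms of $C(K)$. First I would invoke \cite[Theorem 5.7]{P}: for the commutative Fr\'{e}chet locally $C^{\ast }$-algebra $A[\tau _{\Gamma }]$ there is a Hausdorff countably compactly generated topological space $X$, which is the direct limit of a sequence $\{K_{n}\}_{n}$ of Hausdorff compact spaces, and a (topological $\ast $-) isomorphism $A[\tau _{\Gamma }]\cong C(X)$ under which, for each $n$, the factor $C(X)_{n}$ of the Arens--Michael decomposition is identified with $C(K_{n})$. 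Since $\{p_{K_{n}}\}_{n}$ is a countable directed family defining the topology of $C(X)$, this sequence is cofinal among the $C^{\ast }$-seminorms, so, as quotients of scattered $C^{\ast }$-algebras are scattered, Proposition \ref{scatt} applies in the form: $C(X)$ is scattered if and only if every $C(K_{n})$ is a scattered $C^{\ast }$-algebra.

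Next I would use the classical equivalence (W. Rudin \cite{R}, A. Pelczynski--Z. Semadeni \cite{PS}; see also \cite{J1}, \cite{Ro}) that for a compact Hausdorff space $K$ the $C^{\ast }$-algebra $C(K)$ is scattered if and only if $K$ is a scattered topological space. Combining this with the previous paragraph, and using the fact recorded in the Remark that scatteredness is an isomorphism invariant of locally $C^{\ast }$-algebras, I get: $A[\tau _{\Gamma }]$ is scattered $\iff C(X)$ is scattered $\iff$ each $C(K_{n})$ is a scattered $C^{\ast }$-algebra $\iff$ each $K_{n}$ is a scattered compact Hausdorff space.

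This already gives the forward implication, with the space $X$ furnished by \cite[Theorem 5.7]{P}. For the converse, if some Hausdorff countably compactly generated $X$, arising as the direct limit of scattered Hausdorff compact spaces $\{K_{n}\}_{n}$, satisfies $A[\tau _{\Gamma }]\cong C(X)$, then by \cite[Theorem 5.7]{P} the factors of $C(X)$ are the $C(K_{n})$, each scattered by the classical result; hence $C(X)$, and therefore $A[\tau _{\Gamma }]$, is scattered by Proposition \ref{scatt} and the Remark. The only steps needing care are checking that the Arens--Michael factors of $C(X)$ are exactly the $C(K_{n})$ (supplied by \cite[Theorem 5.7]{P}) and that the scattered/$C(K)$ equivalence is quotable in the $C^{\ast }$-algebraic sense used in Definition \textup{2.1}; both are standard, so the argument is short and the main work is purely bookkeeping rather than a genuine obstacle.
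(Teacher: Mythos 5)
Your argument is correct and is exactly the one the paper intends (the paper states the corollary without proof, relying on the preceding paragraph): combine \cite[Theorem 5.7]{P} identifying $A[\tau _{\Gamma }]$ with $C(X)$ and the factors $C(X)_{n}$ with $C(K_{n})$, Proposition \ref{scatt}, and the classical fact that $C(K)$ is scattered iff $K$ is scattered. The only superfluous step is the aside about quotients of scattered $C^{\ast }$-algebras, which is not needed once the $p_{K_{n}}$ are recognized as a defining family so that the Arens--Michael factors of $C(X)$ are the $C(K_{n})$.
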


\begin{corollary}
\label{tens}Let $A[\tau _{\Gamma }]$ and $B[\tau _{\Gamma ^{\prime }}]\ $be
two locally $C^{\ast }$-algebras. Then the maximal tensor product $A[\tau
_{\Gamma }]$ $\otimes _{\max }B[\tau _{\Gamma ^{\prime }}]$ of $A[\tau
_{\Gamma }]$ and $B[\tau _{\Gamma ^{\prime }}]$ is a scattered locally $%
C^{\ast }$-algebra if and only if the locally $C^{\ast }$-algebras $A[\tau
_{\Gamma }]$ and $B[\tau _{\Gamma ^{\prime }}]\ $are scattered.
\end{corollary}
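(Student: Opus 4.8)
The plan is to reduce the statement about the maximal tensor product of locally $C^{\ast }$-algebras to the corresponding known result for $C^{\ast }$-algebras, using Proposition \ref{scatt} as the bridge. The key structural fact I would invoke is that the Arens-Michael decomposition of $A[\tau _{\Gamma }]\otimes _{\max }B[\tau _{\Gamma ^{\prime }}]$ is, up to topological $\ast $-isomorphism, the inverse system $\{A_{\lambda }\otimes _{\max }B_{\mu }\}$ indexed by pairs $(\lambda ,\mu )\in \Lambda \times M$ with the product order; equivalently, $A[\tau _{\Gamma }]\otimes _{\max }B[\tau _{\Gamma ^{\prime }}]=\lim\limits_{\leftarrow (\lambda ,\mu )}\bigl(A_{\lambda }\otimes _{\max }B_{\mu }\bigr)$. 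This is the locally-$C^{\ast }$ analogue of the fact that the maximal $C^{\ast }$-tensor product commutes with inverse limits; I would either cite the standard reference for tensor products of locally $C^{\ast }$-algebras or give a one-line justification via the universal property of $\otimes _{\max }$.

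With that identification in hand, the proof splits into the two implications, each handled by Proposition \ref{scatt}. For the ``if'' direction, suppose $A[\tau _{\Gamma }]$ and $B[\tau _{\Gamma ^{\prime }}]$ are scattered. By Proposition \ref{scatt}, every $A_{\lambda }$ and every $B_{\mu }$ is a scattered $C^{\ast }$-algebra. By the known $C^{\ast }$-algebra result that the maximal tensor product of two scattered $C^{\ast }$-algebras is scattered, each factor $A_{\lambda }\otimes _{\max }B_{\mu }$ is a scattered $C^{\ast }$-algebra; since these are exactly the Arens-Michael factors of $A[\tau _{\Gamma }]\otimes _{\max }B[\tau _{\Gamma ^{\prime }}]$, Proposition \ref{scatt} gives that the tensor product is scattered. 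For the ``only if'' direction, suppose $A[\tau _{\Gamma }]\otimes _{\max }B[\tau _{\Gamma ^{\prime }}]$ is scattered. By Proposition \ref{scatt}, each $A_{\lambda }\otimes _{\max }B_{\mu }$ is scattered; by the $C^{\ast }$-algebra result in the reverse direction (a maximal tensor product being scattered forces both tensor factors to be scattered, via the embeddings or quotient maps onto the factors), each $A_{\lambda }$ and each $B_{\mu }$ is scattered, whence $A[\tau _{\Gamma }]$ and $B[\tau _{\Gamma ^{\prime }}]$ are scattered by Proposition \ref{scatt} again. Alternatively, the ``only if'' direction follows directly from Corollary \ref{sub}, since $A[\tau _{\Gamma }]$ and $B[\tau _{\Gamma ^{\prime }}]$ embed as closed $\ast $-subalgebras of the tensor product (for instance after adjoining units, or by using a state of the other algebra as a slice map), so any closed $\ast $-subalgebra of a scattered locally $C^{\ast }$-algebra is scattered.

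The main obstacle I anticipate is the identification of the Arens-Michael decomposition of the maximal tensor product with $\{A_{\lambda }\otimes _{\max }B_{\mu }\}$: one must check that the defining family of $C^{\ast }$-seminorms on $A[\tau _{\Gamma }]\otimes _{\max }B[\tau _{\Gamma ^{\prime }}]$ is (cofinal with) the family $p_{\lambda }\otimes _{\max }q_{\mu }$, and that the kernels and completions match up correctly. This is essentially known in the literature on tensor products of locally $C^{\ast }$-algebras, so in the write-up I would state it with a citation rather than reprove it, and then the rest is a short and formal application of Proposition \ref{scatt} together with the scattered-$C^{\ast }$-algebra tensor theorem. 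A secondary point to be careful about is citing the correct $C^{\ast }$-algebra statement for both directions of the tensor equivalence; the references \cite{H}, \cite{J2}, or \cite{L} listed in the introduction should supply this.
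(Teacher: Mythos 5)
Your argument is exactly the paper's: the proof there simply cites Proposition \ref{scatt}, \cite[Proposition 1]{C} (the scattered $C^{\ast}$-algebra tensor product result), and \cite[Theorem 31.7 and Corollary 31.11]{F} (the identification of the Arens--Michael factors of $A[\tau _{\Gamma }]\otimes _{\max }B[\tau _{\Gamma ^{\prime }}]$ with the $A_{\lambda }\otimes _{\max }B_{\mu }$), which is precisely the reduction you spell out. The only small correction is bibliographic: the $C^{\ast}$-algebra tensor equivalence should be cited from \cite{C}, not from \cite{H}, \cite{J2} or \cite{L}.
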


\begin{proof}
It follows from Proposition \ref{scatt}, \cite[Proposition 1]{C} and \cite[%
Theorem 31.7 and Corollary 31.11]{F}.
\end{proof}

Recall that a continuous $\ast $-representation of a locally $C^{\ast }$%
-algebra $A[\tau _{\Gamma }]$ on a Hilbert space is a pair $\left( \varphi ,%
\mathcal{H}_{\varphi }\right) $ consisting of a Hilbert space $\mathcal{H}%
_{\varphi }$ and a continuous $\ast $-morphism $\varphi $ from $A[\tau
_{\Gamma }]$ to $L(\mathcal{H}_{\varphi })$, the $C^{\ast }$-algebra of all
bounded linear operators on $\mathcal{H}_{\varphi }$. If $\left( \varphi ,%
\mathcal{H}_{\varphi }\right) $ is a representation of $A[\tau _{\Gamma }]$,
then there exist $\lambda \in \Lambda $, and a $\ast $-representation $%
\left( \varphi _{\lambda },\mathcal{H}_{\varphi }\right) $ of $A_{\lambda }$
such that $\varphi =\varphi _{\lambda }\circ \pi _{\lambda }^{A}$.

A continuous $\ast $-representation $\left( \varphi ,\mathcal{H}_{\varphi
}\right) \ $of $A[\tau _{\Gamma }]$ is of type $I$ if the von Neumann
algebra generated by $\varphi \left( A\right) $ is of type $I$ (that is, the
commutant of $\varphi \left( A\right) $ is an abelian $\ast $-subalgebra of $%
L(\mathcal{H}_{\varphi })$ ). A locally $C^{\ast }$-algebra $A[\tau _{\Gamma
}]$ is of type $I$ if each of its continuous $\ast $-representations is of
type $I$.

\begin{corollary}
\label{type}Let $A[\tau _{\Gamma }]$ be a locally $C^{\ast }$-algebra. If $%
A[\tau _{\Gamma }]$ is scattered, then $A[\tau _{\Gamma }]$ is of type $I$.
\end{corollary}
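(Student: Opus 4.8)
The plan is to reduce the statement about the locally $C^{\ast}$-algebra $A[\tau_{\Gamma}]$ to the corresponding known fact about $C^{\ast}$-algebras via the Arens--Michael decomposition. The key input is Proposition \ref{scatt}: if $A[\tau_{\Gamma}]$ is scattered, then each factor $A_{\lambda}$, $\lambda\in\Lambda$, is a scattered $C^{\ast}$-algebra, and every scattered $C^{\ast}$-algebra is of type $I$ (this is one of the classical equivalent characterizations of scattered $C^{\ast}$-algebras, available from the references \cite{J1}, \cite{Ro}, \cite{K} cited in the introduction). So the heart of the argument is to show that a locally $C^{\ast}$-algebra all of whose Arens--Michael factors are of type $I$ is itself of type $I$.

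First I would take an arbitrary continuous $\ast$-representation $(\varphi,\mathcal{H}_{\varphi})$ of $A[\tau_{\Gamma}]$ and use the structural fact recalled just before the corollary: there exist $\lambda\in\Lambda$ and a $\ast$-representation $(\varphi_{\lambda},\mathcal{H}_{\varphi})$ of the $C^{\ast}$-algebra $A_{\lambda}$ such that $\varphi=\varphi_{\lambda}\circ\pi_{\lambda}^{A}$. Since $\pi_{\lambda}^{A}$ has dense range in $A_{\lambda}$ (indeed $\pi_{\lambda}^{A}(A)=A_{\lambda}$ by construction of the quotient, and in any case its image is dense), we have $\varphi(A)=\varphi_{\lambda}(\pi_{\lambda}^{A}(A))=\varphi_{\lambda}(A_{\lambda})$, hence the von Neumann algebra generated by $\varphi(A)$ coincides with the von Neumann algebra generated by $\varphi_{\lambda}(A_{\lambda})$. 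Because $A[\tau_{\Gamma}]$ is scattered, Proposition \ref{scatt} tells us $A_{\lambda}$ is a scattered $C^{\ast}$-algebra, hence of type $I$; therefore the von Neumann algebra generated by $\varphi_{\lambda}(A_{\lambda})$ is of type $I$. Consequently the von Neumann algebra generated by $\varphi(A)$ is of type $I$, i.e.\ $(\varphi,\mathcal{H}_{\varphi})$ is of type $I$. As $\varphi$ was arbitrary, $A[\tau_{\Gamma}]$ is of type $I$.

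The only genuinely substantive point is the implication ``scattered $C^{\ast}$-algebra $\Longrightarrow$ type $I$,'' which I would simply cite from the literature on scattered $C^{\ast}$-algebras (for instance via the characterization that a $C^{\ast}$-algebra is scattered iff its bidual is an atomic von Neumann algebra, which is of type $I$, or via the known implication that scattered $C^{\ast}$-algebras have a composition series with elementary subquotients); the excerpt already lists the relevant references. Everything else is the routine transfer through $\pi_{\lambda}^{A}$ that was already spelled out in the discussion preceding the corollary. I do not expect any real obstacle: the potential subtlety — that different continuous representations factor through different indices $\lambda$ — causes no trouble, since scatteredness of \emph{every} factor is guaranteed by Proposition \ref{scatt}, so whichever $\lambda$ arises, $A_{\lambda}$ is scattered and hence of type $I$.
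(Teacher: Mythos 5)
Your proof is correct and follows essentially the same route as the paper: reduce to the Arens--Michael factors via Proposition \ref{scatt}, invoke the classical fact that scattered $C^{\ast}$-algebras are of type $I$ (the paper cites \cite[Theorem 2.3]{J1}), and transfer type $I$ back to $A[\tau _{\Gamma }]$. The only difference is that you prove the needed implication (all factors of type $I$ implies $A[\tau _{\Gamma }]$ of type $I$) directly, by factoring each continuous representation through some $A_{\lambda }$, whereas the paper simply cites \cite[Proposition 30.8]{F} for this equivalence.
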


\begin{proof}
Since $A[\tau _{\Gamma }]$ is scattered, the $A_{\lambda }$'s are scattered,
as well and by \cite[Theorem 2.3]{J1}, $A_{\lambda },\lambda \in \Lambda $
are of type $I$. The corollary is proved, since $A[\tau _{\Gamma }]$ is of
type $I$ if and only if $A_{\lambda },\lambda \in \Lambda ,$ are of type $I$ 
\cite[Proposition 30.8]{F}.
\end{proof}

Let $\mathcal{I}$ be a closed two-sided $\ast $-ideal of $A[\tau _{\Gamma }]$%
. Then the quotient $\ast $-algebra $A[\tau _{\Gamma }]/\mathcal{I}$,
equipped with the quotient topology, is a pre-locally $C^{\ast }$-algebra,
and its completion $\overline{A[\tau _{\Gamma }]/\mathcal{I}}$ is a locally $%
C^{\ast }$-algebra. Moreover, for each $\lambda \in \Lambda ,$ $\mathcal{I}%
_{\lambda }=\overline{\pi _{\lambda }^{A}(\mathcal{I})}$, the closure of $%
\pi _{\lambda }^{A}(\mathcal{I})$ in the\textbf{\ }$C^{\ast }$-algebra%
\textbf{\ }$A_{\lambda },$ is a closed two-sided $\ast $-ideal of $%
A_{\lambda }$ and the $C^{\ast }$-algebras $A_{\lambda }/\mathcal{I}%
_{\lambda }$ and $\left( \overline{A[\tau _{\Gamma }]/\mathcal{I}}\right)
_{\lambda }$ are isomorphic (see, for example, \cite[Theorem 11.7]{F}).

\begin{proposition}
Let $A[\tau _{\Gamma }]$ be a locally $C^{\ast }$-algebra and $\mathcal{I}$
a closed two-sided $\ast $-ideal of $A[\tau _{\Gamma }]$. Then $A[\tau
_{\Gamma }]$ is scattered if and only if $\mathcal{I}$ and $\overline{A[\tau
_{\Gamma }]/\mathcal{I}}$ are scattered.
\end{proposition}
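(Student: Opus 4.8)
The plan is to reduce the statement to the corresponding fact about $C^{\ast }$-algebras via Proposition \ref{scatt}. First I would recall, as in the paragraph immediately preceding the statement and in the proof of Corollary \ref{sub}, that for each $\lambda \in \Lambda $ the Arens--Michael factor of the closed two-sided $\ast $-ideal $\mathcal{I}$ is $\mathcal{I}_{\lambda }=\overline{\pi _{\lambda }^{A}(\mathcal{I})}$, which is a closed two-sided $\ast $-ideal of the $C^{\ast }$-algebra $A_{\lambda }$, and that $\left( \overline{A[\tau _{\Gamma }]/\mathcal{I}}\right) _{\lambda }$ is $\ast $-isomorphic to $A_{\lambda }/\mathcal{I}_{\lambda }$. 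These identifications are exactly what allows the Arens--Michael machinery to be applied to $\mathcal{I}$ and to $\overline{A[\tau _{\Gamma }]/\mathcal{I}}$.

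Next I would invoke the known $C^{\ast }$-algebraic splitting result: a $C^{\ast }$-algebra $B$ is scattered if and only if a closed two-sided ideal $J$ of $B$ and the quotient $B/J$ are both scattered (see, e.g., \cite{J1}, \cite{K1}). Applied with $B=A_{\lambda }$ and $J=\mathcal{I}_{\lambda }$, this gives, for each fixed $\lambda \in \Lambda $, the equivalence: $A_{\lambda }$ is scattered if and only if both $\mathcal{I}_{\lambda }$ and $A_{\lambda }/\mathcal{I}_{\lambda }$ are scattered.

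Finally I would assemble the equivalences. By Proposition \ref{scatt}, $A[\tau _{\Gamma }]$ is scattered if and only if $A_{\lambda }$ is scattered for every $\lambda $; applying Proposition \ref{scatt} to the locally $C^{\ast }$-algebra $\mathcal{I}$ (whose factors are the $\mathcal{I}_{\lambda }$) shows $\mathcal{I}$ is scattered if and only if $\mathcal{I}_{\lambda }$ is scattered for every $\lambda $, and applying it to $\overline{A[\tau _{\Gamma }]/\mathcal{I}}$ (whose factors are the $A_{\lambda }/\mathcal{I}_{\lambda }$) shows $\overline{A[\tau _{\Gamma }]/\mathcal{I}}$ is scattered if and only if $A_{\lambda }/\mathcal{I}_{\lambda }$ is scattered for every $\lambda $. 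Combining these three "for all $\lambda $" statements with the fixed-$\lambda $ equivalence of the previous step, and using that a conjunction of two universally quantified statements over $\lambda $ is the universally quantified conjunction, yields the claim.

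As for the main obstacle: there is no genuine analytic difficulty, since all the work is already packaged in Proposition \ref{scatt} and in the $C^{\ast }$-algebra ideal/quotient result. The only point requiring care is the bookkeeping of Arens--Michael factors — namely that the factors of $\mathcal{I}$ are precisely $\overline{\pi _{\lambda }^{A}(\mathcal{I})}$ and that forming the quotient commutes with passing to factors, i.e. $\left( \overline{A[\tau _{\Gamma }]/\mathcal{I}}\right) _{\lambda }\cong A_{\lambda }/\mathcal{I}_{\lambda }$; both facts are recorded in the excerpt and can simply be cited.
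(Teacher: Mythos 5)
Your proposal is correct and follows essentially the same route as the paper, which likewise combines the identifications $\mathcal{I}_{\lambda }=\overline{\pi _{\lambda }^{A}(\mathcal{I})}$ and $\left( \overline{A[\tau _{\Gamma }]/\mathcal{I}}\right) _{\lambda }\cong A_{\lambda }/\mathcal{I}_{\lambda }$ from the preceding discussion with the $C^{\ast }$-algebra ideal/quotient result (\cite[Proposition 2.4]{J1}) and Proposition \ref{scatt}. No gaps to report.
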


\begin{proof}
It follows from the above discussion, \cite[Proposition 2.4]{J1} and
Proposition \ref{scatt}.
\end{proof}

\begin{remark}
If $A[\tau _{\Gamma }]$ is a Fr\'{e}chet locally $C^{\ast }$-algebra and $%
\mathcal{I}$ is a closed two-sided $\ast $-ideal of $A[\tau _{\Gamma }]$,
then the quotient $\ast $-algebra $A[\tau _{\Gamma }]/\mathcal{I}$ is
complete and so it is a Fr\'{e}chet locally $C^{\ast }$-algebra (see, for
example, \cite[Corollary 11.8]{F}). Therefore, if $A[\tau _{\Gamma }]$ is a
Fr\'{e}chet locally $C^{\ast }$-algebra and $\mathcal{I}$ is a closed
two-sided $\ast $-ideal of $A[\tau _{\Gamma }]$, then $A[\tau _{\Gamma }]$
is scattered if and only if $\mathcal{I}$ and $A[\tau _{\Gamma }]/\mathcal{I}
$ are scattered.
\end{remark}

An element $a$ in a locally $C^{\ast }$-algebra $A[\tau _{\Gamma }]$ is
bounded if $\sup \{p_{\lambda }\left( a\right) ;\lambda \in \Lambda
\}<\infty $. Put $b\left( A[\tau _{\Gamma }]\right) =\{a\in A[\tau _{\Gamma
}];a$ is bounded\}. The map $\left\Vert \cdot \right\Vert _{\infty }:$ $%
b\left( A[\tau _{\Gamma }]\right) \rightarrow \lbrack 0,\infty )$ defined by 
\begin{equation*}
\left\Vert a\right\Vert _{\infty }=\sup \{p_{\lambda }\left( a\right)
;\lambda \in \Lambda \}
\end{equation*}%
is a $C^{\ast }$-norm, and $b\left( A[\tau _{\Gamma }]\right) $, equipped
with this $C^{\ast }$-norm, is a $C^{\ast }$-algebra which is dense in $%
A[\tau _{\Gamma }]$. Moreover, for each $\lambda \in \Lambda ,$ $\ker
p_{\lambda }|_{b(A[\tau _{\Gamma }])}=\ker p_{\lambda }\cap b\left( A[\tau
_{\Gamma }]\right) $ is a closed two-sided $\ast $-ideal of $b\left( A[\tau
_{\Gamma }]\right) $ and the $C^{\ast }$-algebras $b\left( A[\tau _{\Gamma
}]\right) /\ker p_{\lambda }|_{b(A[\tau _{\Gamma }])}$ and $A_{\lambda }$
are isomorphic (see, for example, \cite[Theorem 10.24]{F} ).

\begin{proposition}
\label{bond}Let $A[\tau _{\Gamma }]$ be a locally $C^{\ast }$-algebra.

\begin{enumerate}
\item If the $C^{\ast }$-algebra $b(A[\tau _{\Gamma }])$ of all bounded
elements is scattered, then $A[\tau _{\Gamma }]$ is scattered.

\item If $A[\tau _{\Gamma }]$\ is scattered and for some $\lambda \in
\Lambda $, the closed two sided $\ast $-ideal $\ker p_{\lambda }|_{b(A[\tau
_{\Gamma }])}$ of $b(A[\tau _{\Gamma }])$ is scattered, then $b(A[\tau
_{\Gamma }])$\ is scattered.
\end{enumerate}
\end{proposition}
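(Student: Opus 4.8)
The plan is to reduce both statements to known permanence properties of scattered $C^{\ast}$-algebras, using Proposition \ref{scatt} together with the identification, recalled just above the statement, of the Arens--Michael factor $A_{\lambda}$ with the quotient $C^{\ast}$-algebra $b(A[\tau_{\Gamma}])/\ker p_{\lambda}|_{b(A[\tau_{\Gamma}])}$.

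For part (1), I would fix $\lambda\in\Lambda$ and observe that $A_{\lambda}$ is isomorphic to the quotient of the $C^{\ast}$-algebra $b(A[\tau_{\Gamma}])$ by the closed two-sided $\ast$-ideal $\ker p_{\lambda}|_{b(A[\tau_{\Gamma}])}$. Since a quotient of a scattered $C^{\ast}$-algebra by a closed two-sided ideal is again scattered --- one implication of \cite[Proposition 2.4]{J1} --- it follows that $A_{\lambda}$ is scattered. As $\lambda$ is arbitrary, every factor in the Arens--Michael decomposition of $A[\tau_{\Gamma}]$ is scattered, and Proposition \ref{scatt} then gives that $A[\tau_{\Gamma}]$ is scattered.

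For part (2), suppose $A[\tau_{\Gamma}]$ is scattered and that $J:=\ker p_{\lambda}|_{b(A[\tau_{\Gamma}])}$ is a scattered $C^{\ast}$-algebra for some $\lambda\in\Lambda$. By Proposition \ref{scatt}, $A_{\lambda}$ is scattered, hence so is the isomorphic $C^{\ast}$-algebra $b(A[\tau_{\Gamma}])/J$. Thus $b(A[\tau_{\Gamma}])$ is a $C^{\ast}$-algebra carrying a closed two-sided $\ast$-ideal $J$ for which both $J$ and $b(A[\tau_{\Gamma}])/J$ are scattered, and the other implication of \cite[Proposition 2.4]{J1} yields that $b(A[\tau_{\Gamma}])$ is scattered.

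I do not expect a substantial obstacle: the argument is an assembly of the already-recorded isomorphism $b(A[\tau_{\Gamma}])/\ker p_{\lambda}|_{b(A[\tau_{\Gamma}])}\cong A_{\lambda}$, Proposition \ref{scatt}, and the stability of scatteredness under passage to ideals and quotients in the $C^{\ast}$-setting. The only point deserving a moment's care is using \cite[Proposition 2.4]{J1} in the correct direction in each part --- the ``only if'' (passage to a quotient) for (1) and the ``if'' (extension from a scattered ideal with scattered quotient) for (2) --- and noticing that the hypothesis in (2) concerning a single $\lambda$ is precisely what supplies the scattered ideal needed to run the extension argument.
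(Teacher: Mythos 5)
Your proposal is correct and follows essentially the same route as the paper: both parts rest on the isomorphism $A_{\lambda}\cong b(A[\tau _{\Gamma }])/\ker p_{\lambda }|_{b(A[\tau _{\Gamma }])}$, the two directions of \cite[Proposition 2.4]{J1}, and Proposition \ref{scatt}. No gaps.
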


\begin{proof}
$(1)$ If $b(A[\tau _{\Gamma }])$ is scattered, then for each $\lambda \in
\Lambda ,$ $\ker p_{\lambda }|_{b(A[\tau _{\Gamma }])}$ and $b(A[\tau
_{\Gamma }])/$ $\ker p_{\lambda }|_{b(A[\tau _{\Gamma }])}$ are scattered 
\cite[Proposition 2.4]{J1}. Therefore, for each $\lambda \in \Lambda $, the $%
C^{\ast }$-algebra $A_{\lambda }$ is scattered, since $A_{\lambda }$ is
isomorphic with $b(A[\tau _{\Gamma }])/\ker p_{\lambda }|_{b(A[\tau _{\Gamma
}])}$, and by Proposition \ref{scatt}, $A[\tau _{\Gamma }]$ is scattered.

$(2)\ $If $A[\tau _{\Gamma }]$\ is scattered, then $A_{\lambda }$ is
scattered, and since $A_{\lambda }\ $is isomorphic with $b(A[\tau _{\Gamma
}])/$ $\ker p_{\lambda }|_{b(A[\tau _{\Gamma }])}$ and the closed two sided $%
\ast $-ideal $\ker p_{\lambda }|_{b(A[\tau _{\Gamma }])}$ of $b(A[\tau
_{\Gamma }])$ is scattered, $b(A[\tau _{\Gamma }])$\ is scattered.
\end{proof}

Let $A[\tau _{\Gamma }]$ be a locally $C^{\ast }$-algebra and $Z(A[\tau
_{\Gamma }])=\{a\in A;ab=ba$ for all $b\in A\}$ its center. Clearly, $%
Z(A[\tau _{\Gamma }])$ is a commutative locally $C^{\ast }$-subalgebra of $A$%
, and so it is a locally $C^{\ast }$-algebra with respect to the topology
given by the family of $C^{\ast }$-seminorms $\{p_{\lambda }|_{Z(A[\tau
_{\Gamma }])}\}_{\lambda \in \Lambda }$. For each $\lambda ,\mu \in \Lambda $
with $\lambda \geq \mu ,\pi _{\lambda \mu }^{A}\left( Z(A_{\lambda })\right)
\subseteq Z(A_{\mu })$ and so $\{Z(A_{\lambda });\pi _{\lambda \mu
}^{A}|_{Z(A_{\lambda })}\}_{\lambda ,\mu \in \Lambda ,\lambda \geq \mu }$ is
an inverse system of $C^{\ast }$-algebras.

\begin{proposition}
Let $A[\tau _{\Gamma }]$ be a locally $C^{\ast }$-algebra. Then $Z(A[\tau
_{\Gamma }])=\lim\limits_{\leftharpoonup \lambda }Z(A_{\lambda })$, up to an
isomorphism of locally $C^{\ast }$-algebras.
\end{proposition}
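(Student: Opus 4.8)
The plan is to exhibit the topological $\ast$-isomorphism explicitly via the Arens--Michael machinery, verifying compatibility on the inverse limits.

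First I would recall that $Z(A[\tau_{\Gamma}])$ is a locally $C^{\ast}$-algebra whose Arens--Michael decomposition is built from the $C^{\ast}$-seminorms $\{p_{\lambda}|_{Z(A[\tau_{\Gamma}])}\}_{\lambda\in\Lambda}$, so that its $\lambda$-th factor is $Z(A[\tau_{\Gamma}])/\ker\bigl(p_{\lambda}|_{Z(A[\tau_{\Gamma}])}\bigr)$, which embeds canonically as a $C^{\ast}$-subalgebra of $A_{\lambda}$ via $\pi_{\lambda}^{A}$. Denote this factor by $Z(A[\tau_{\Gamma}])_{\lambda}$. The task then reduces to identifying $Z(A[\tau_{\Gamma}])_{\lambda}$ with $Z(A_{\lambda})$ inside $A_{\lambda}$, compatibly with the connecting maps, so that passing to inverse limits yields the claimed isomorphism together with the fact (already noted in the excerpt) that $A[\tau_{\Gamma}]=\lim\limits_{\leftarrow\lambda}A_{\lambda}$ up to topological $\ast$-isomorphism.

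Next I would establish the two inclusions. The inclusion $Z(A[\tau_{\Gamma}])_{\lambda}\subseteq Z(A_{\lambda})$ is immediate: if $a\in Z(A[\tau_{\Gamma}])$ then $\pi_{\lambda}^{A}(a)$ commutes with $\pi_{\lambda}^{A}(b)$ for all $b\in A$, and since $\pi_{\lambda}^{A}(A)$ is dense in $A_{\lambda}$ and multiplication is continuous, $\pi_{\lambda}^{A}(a)\in Z(A_{\lambda})$; this already gives a continuous $\ast$-morphism from $Z(A[\tau_{\Gamma}])_{\lambda}$ into $Z(A_{\lambda})$ commuting with the maps $\pi_{\lambda\mu}^{A}$. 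For the reverse inclusion one needs surjectivity onto $Z(A_{\lambda})$: given $z\in Z(A_{\lambda})$, pick $a\in A$ with $\pi_{\lambda}^{A}(a)=z$; the element $a$ need not be central in $A$, so one must correct it. Here I would use that the center is a continuous operation at the level of inverse limits—more precisely, one lifts a central element compatibly through the tower by a standard inverse-limit argument, exploiting that $\pi_{\lambda\mu}^{A}$ maps $Z(A_{\lambda})$ into $Z(A_{\mu})$ and is surjective, so that $\lim\limits_{\leftarrow\lambda}Z(A_{\lambda})$ is a well-defined locally $C^{\ast}$-subalgebra of $\lim\limits_{\leftarrow\lambda}A_{\lambda}=A[\tau_{\Gamma}]$, and its elements are precisely the coherent families $(z_{\lambda})_{\lambda}$ with each $z_{\lambda}$ central, which commute with every coherent family in $A[\tau_{\Gamma}]$, i.e.\ lie in $Z(A[\tau_{\Gamma}])$. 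Conversely every element of $Z(A[\tau_{\Gamma}])$ gives such a coherent family by the first inclusion.

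I expect the main obstacle to be the surjectivity/lifting step: showing that an element of $Z(A_{\lambda})$, or more to the point a coherent family of central elements, actually comes from a genuinely central element of $A$ rather than merely from an element whose image is central in each factor. The cleanest route is to identify $Z(A[\tau_{\Gamma}])$ with the set of coherent central families directly: an element $a=(a_{\lambda})_{\lambda}\in A[\tau_{\Gamma}]=\lim\limits_{\leftarrow\lambda}A_{\lambda}$ satisfies $ab=ba$ for all $b\in A$ if and only if $a_{\lambda}b_{\lambda}=b_{\lambda}a_{\lambda}$ in $A_{\lambda}$ for all $\lambda$ and all $b=(b_{\lambda})_{\lambda}$, which by density of $\pi_{\lambda}^{A}(A)$ in $A_{\lambda}$ is equivalent to $a_{\lambda}\in Z(A_{\lambda})$ for every $\lambda$; that is exactly the condition defining $\lim\limits_{\leftarrow\lambda}Z(A_{\lambda})$. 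This identification is a homeomorphism because the topology on both sides is the subspace topology inherited from $A[\tau_{\Gamma}]$, given by the same family of $C^{\ast}$-seminorms $\{p_{\lambda}\}_{\lambda}$ restricted to the center, completing the proof.
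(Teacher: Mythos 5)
Your final argument --- identifying $Z(A[\tau _{\Gamma }])$ with the set of coherent families $(a_{\lambda })_{\lambda }$ in $\lim\limits_{\leftarrow \lambda }A_{\lambda }$ whose components are central, via $a\mapsto (\pi _{\lambda }^{A}(a))_{\lambda }$, with the same seminorms on both sides --- is correct and is essentially the paper's own proof. One caveat: drop the opening ``reduction'' to identifying the factor $(Z(A[\tau _{\Gamma }]))_{\lambda }$ with $Z(A_{\lambda })$, since that identification fails in general (the paper's Remark immediately after the proposition notes the canonical map $(Z(A[\tau _{\Gamma }]))_{\lambda }\rightarrow Z(A_{\lambda })$ need not be onto), and your concluding inverse-limit identification never actually uses it.
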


\begin{proof}
Consider the map $\Phi :Z(A[\tau _{\Gamma }])\rightarrow
\lim\limits_{\leftharpoonup \lambda }Z(A_{\lambda })\ $given by 
\begin{equation*}
\Phi \left( a\right) =\left( \pi _{\lambda }^{A}\left( a\right) \right)
_{\lambda }\text{.}
\end{equation*}%
Clearly,$\ \Phi $ is a $\ast $-morphism$\ $and $p_{Z(A_{\lambda })}\left(
\Phi \left( a\right) \right) =p_{\lambda }|_{Z(A[\tau _{\Gamma }])}\left(
a\right) $ for all $a\in Z(A[\tau _{\Gamma }])$ and for all $\lambda \in
\Lambda $. If $\left( a_{\lambda }\right) _{\lambda }$is a coherent sequence
in $\{Z(A_{\lambda });\pi _{\lambda \mu }^{A}|_{Z(A_{\lambda })}\}_{\lambda
,\mu \in \Lambda ,\lambda \geq \mu }$, then there is $a\in A$ such that $\pi
_{\lambda }^{A}\left( a\right) =a_{\lambda }$ for all $\lambda \in \Lambda $%
. Take $b\in A$. From $\pi _{\lambda }^{A}\left( ab\right) =\pi _{\lambda
}^{A}\left( a\right) \pi _{\lambda }^{A}\left( b\right) =\pi _{\lambda
}^{A}\left( b\right) \pi _{\lambda }^{A}\left( a\right) =\pi _{\lambda
}^{A}\left( ba\right) $ for all $\lambda \in \Lambda $, we deduce that $ab=ba
$, and so $a\in Z(A[\tau _{\Gamma }])$. Therefore, $\Phi $ is an isomorphism
of locally $C^{\ast }$-algebras. 
\end{proof}

\begin{remark}
We remark that, in general, the isometric $C^{\ast }$-morphism $\varphi
_{\lambda }:\left( Z(A[\tau _{\Gamma }])\right) _{\lambda }$ $\rightarrow
Z(A_{\lambda }),\varphi _{\lambda }\left( a+\ker \left( p_{\lambda
}|_{Z(A[\tau _{\Gamma }])}\right) \right) =a+\ker p_{\lambda }$ is not onto.
\end{remark}

An inverse system $\{A_{i};\chi _{ij}\}_{i,j\in I,i\geq j}$ of topological
algebras is called \textit{perfect}, if the restrictions to the inverse
limit algebra $A=\lim\limits_{\leftarrow i}A_{i}$ of the canonical
projections $\pi _{i}:\tprod\limits_{i\in I}A_{i}\rightarrow A_{i},i\in I$,
namely, the continuous morphisms $\pi _{i}|_{A}:A\rightarrow A_{i},i\in I$,
are onto maps. The resulted inverse limit algebra $A=\lim\limits_{\leftarrow
i}A_{i}$ is called \textit{perfect topological algebra} \cite[Definition 2.7]%
{HM}.

\begin{definition}
We say that a locally $C^{\ast }$-algebra $A[\tau _{\Gamma }]\ $is with
perfect center, if the inverse system of $C^{\ast }$-algebras $%
\{Z(A_{\lambda });\pi _{\lambda \mu }^{A}|_{Z(A_{\lambda })}\}_{\lambda ,\mu
\in \Lambda ,\lambda \geq \mu }$ is perfect.
\end{definition}

Let $\{H_{\lambda }\}_{\lambda \in \Lambda }$ be a directed family of
Hilbert spaces such that for each $\lambda ,\mu \in \Lambda $ with $\lambda
\geq \mu ,$ $H_{\mu }$ is a closed subspace of $H_{\lambda }$ and $%
\left\langle \cdot ,\cdot \right\rangle _{\mu }=\left\langle \cdot ,\cdot
\right\rangle _{\lambda }|_{H_{\mu }}$. Then $H=\lim\limits_{\lambda
\rightarrow }H_{\lambda }$ with the inductive limit topology is called a 
\textit{locally Hilbert space.} $L(H)$ denotes all linear maps $%
T:H\rightarrow H$ such that for each $\lambda \in \Lambda $, $T|_{H_{\lambda
}}\in L(H_{\lambda })$,$\ $the $C^{\ast }$-algebra of all bounded linear
operators on $H_{\lambda }$, and $P_{\lambda \mu }T|_{H_{\lambda
}}=T|_{H_{\lambda }}P_{\lambda \mu }$ for all $\lambda ,\mu \in \Lambda $
with $\lambda \geq \mu $, where $P_{\lambda \mu }$ is the projection of $%
H_{\lambda }$ on $H_{\mu }$. If $T\in L(H)$, then there is $T^{\ast }\in
L(H)\ $such that $T^{\ast }|_{H_{\lambda }}=\left( T|_{H_{\lambda }}\right)
^{\ast }$ for all $\lambda \in \Lambda $. Then $L(H)$ has a structure of
locally $C^{\ast }$-algebra with the topology given by the family of $%
C^{\ast }$-seminorms $\{p_{\lambda }\}_{\lambda \in \Lambda }$ with $%
p_{\lambda }\left( T\right) =\left\Vert T|_{H_{\lambda }}\right\Vert
_{L(H_{\lambda })}\ $(see, for example, \cite[Theorem 5.1]{I}).

\begin{example}
Let $H=\lim\limits_{\lambda \rightarrow }H_{\lambda }$ be a locally Hilbert
space. Then $H$ is a pre-Hilbert space with the inner product given by $%
\left\langle \xi ,\eta \right\rangle =\left\langle \xi ,\eta \right\rangle
_{\lambda }$ if $\xi ,\eta \in H_{\lambda }$. Let $\widetilde{H}\ $be the
Hilbert space obtained by the completion of $H$. For each $\lambda \in
\Lambda ,$ $H_{\lambda }$ is a closed subspace of $\widetilde{H}$.\ The
projection of $\widetilde{H}$ \ on $H_{\lambda }$ is denoted by $P_{\lambda
} $. Clearly, the restriction $P_{\lambda }|_{H}\ $of $P_{\lambda }\ $on$\ H$
is an element in $L(H)$. It is easy to check that $Z(L(H))$ is the locally $%
C^{\ast }$-subalgebra of $L(H)$ generated by $\{P_{\lambda }|_{H},\lambda
\in \Lambda \}$ and for each $\lambda \in \Lambda ,$ $\left( Z(L(H))\right)
_{\lambda }$ is isomorphic with the $C^{\ast }$-subalgebra of $L(H_{\lambda
})\ $generated\ by $\{P_{\lambda }|_{H_{\mu }},\mu \in \Lambda ,\mu \leq
\lambda \}$.

On\ the other hand, $L(H)_{\lambda }\ $is isomorphic with the $C^{\ast }$%
-subalgebra of $L(H_{\lambda })$, the $C^{\ast }$-algebra of all bounded
linear operators on $H_{\lambda }$, generated by $\{T\in L(H_{\lambda
});P_{\lambda \mu }T=TP_{\lambda \mu },\mu \in \Lambda ,\mu \leq \lambda \}\ 
$and then $Z(L(H)_{\lambda })$ is isomorphic with the $C^{\ast }$-subalgebra
of $L(H_{\lambda })\ $generated\ by $\{P_{\lambda }|_{H_{\mu }},\mu \in
\Lambda ,\mu \leq \lambda \}$.

Therefore, for each $\lambda \in \Lambda ,$ the $C^{\ast }$-algebras $\left(
Z(L(H))\right) _{\lambda }$ and $Z(L(H)_{\lambda })$ are isomorphic and $%
L(H) $ is a locally $C^{\ast }$-algebra with perfect center.

If the Hilbert spaces $H_{\lambda },\lambda \in \Lambda ,$ are finite
dimensional, then the $C^{\ast }$-algebras $L(H_{\lambda }),\lambda \in
\Lambda $, are scattered \cite{C}. Therefore the factors $L(H)_{\lambda
},\lambda \in \Lambda $, in the the Arens-Michael decomposition of $L(H)$
are scattered, and, by Proposition \ref{scatt}, $L(H)$ is a scattered
locally $C^{\ast }$-algebra whit perfect center.
\end{example}

It is know that a $C^{\ast }$-algebra $A$ is scattered if and only if it is
of type $I$ and its center $Z(A)$ is a scattered $C^{\ast }$-algebra \cite[%
Theorem 2.2]{K}. The following result is a generalization of \cite[Theorem
2.2]{K}.

\begin{theorem}
Let $A[\tau _{\Gamma }]$ be a locally $C^{\ast }$-algebra with perfect
center. Then the following statements are equivalent:

\begin{enumerate}
\item $A[\tau _{\Gamma }]$ is a scattered locally $C^{\ast }$-algebra;

\item $A[\tau _{\Gamma }]$ is of type $I$ and $Z(A[\tau _{\Gamma }])$ is a
scattered locally $C^{\ast }$-algebra.
\end{enumerate}
\end{theorem}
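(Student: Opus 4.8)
The plan is to reduce everything to the $C^{\ast}$-algebra case via the Arens--Michael decomposition, using Proposition~\ref{scatt} and the perfect-center hypothesis in an essential way. First I would prove the implication $(1)\Rightarrow(2)$. If $A[\tau_{\Gamma}]$ is scattered, then Corollary~\ref{type} already gives that $A[\tau_{\Gamma}]$ is of type $I$; and for the center, by Corollary~\ref{sub} the closed $\ast$-subalgebra $Z(A[\tau_{\Gamma}])$ is scattered. (Note this direction does not even need the perfect-center hypothesis.)

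The substance is in $(2)\Rightarrow(1)$. By Proposition~\ref{scatt}, it suffices to show each $C^{\ast}$-algebra $A_{\lambda}$ in the Arens--Michael decomposition is scattered, and by \cite[Theorem 2.2]{K} this holds as soon as $A_{\lambda}$ is of type $I$ and $Z(A_{\lambda})$ is a scattered $C^{\ast}$-algebra. That $A_{\lambda}$ is of type $I$ for every $\lambda$ is exactly \cite[Proposition 30.8]{F} applied to the hypothesis that $A[\tau_{\Gamma}]$ is of type $I$ (as used already in the proof of Corollary~\ref{type}). So the whole theorem comes down to: \emph{if $Z(A[\tau_{\Gamma}])$ is a scattered locally $C^{\ast}$-algebra, then each $Z(A_{\lambda})$ is a scattered $C^{\ast}$-algebra.} Here is where the perfect-center hypothesis enters. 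We know $Z(A[\tau_{\Gamma}])\cong\lim_{\leftarrow\lambda}Z(A_{\lambda})$, so by Proposition~\ref{scatt} the factors $\bigl(Z(A[\tau_{\Gamma}])\bigr)_{\lambda}$ are scattered $C^{\ast}$-algebras. The isometric $C^{\ast}$-morphism $\varphi_{\lambda}:\bigl(Z(A[\tau_{\Gamma}])\bigr)_{\lambda}\to Z(A_{\lambda})$ from the Remark is always injective; perfectness of the center exactly says the canonical projection $Z(A[\tau_{\Gamma}])\to Z(A_{\lambda})$ is onto, which forces $\varphi_{\lambda}$ to be surjective, hence an isomorphism. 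Therefore $Z(A_{\lambda})\cong\bigl(Z(A[\tau_{\Gamma}])\bigr)_{\lambda}$ is scattered, and we are done.

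I would organize the write-up as: (i) recall $A[\tau_{\Gamma}]$ of type $I$ $\iff$ all $A_{\lambda}$ of type $I$ via \cite[Proposition 30.8]{F}; (ii) recall the $C^{\ast}$-level criterion \cite[Theorem 2.2]{K}; (iii) identify $Z(A_{\lambda})$ with $\bigl(Z(A[\tau_{\Gamma}])\bigr)_{\lambda}$ under the perfect-center assumption, citing the preceding Proposition and Remark; (iv) assemble via Proposition~\ref{scatt}. The only genuine obstacle is step (iii): without perfectness the map $\varphi_{\lambda}$ need not be onto (precisely the content of the Remark), so the center of the $\lambda$-factor could in principle be strictly larger than the $\lambda$-factor of the center, and one cannot conclude it is scattered. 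The perfect-center hypothesis is tailored to close exactly this gap, and I expect the proof to be short once that identification is made explicit.

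\begin{proof}
$(1)\Rightarrow(2)$. If $A[\tau_{\Gamma}]$ is scattered, then by Corollary~\ref{type} it is of type $I$, and by Corollary~\ref{sub} its closed $\ast$-subalgebra $Z(A[\tau_{\Gamma}])$ is a scattered locally $C^{\ast}$-algebra.

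$(2)\Rightarrow(1)$. Since $A[\tau_{\Gamma}]$ is of type $I$, each $C^{\ast}$-algebra $A_{\lambda}$, $\lambda\in\Lambda$, in the Arens--Michael decomposition is of type $I$ \cite[Proposition 30.8]{F}. Since $Z(A[\tau_{\Gamma}])$ is scattered, by Proposition~\ref{scatt} the factors $\bigl(Z(A[\tau_{\Gamma}])\bigr)_{\lambda}$, $\lambda\in\Lambda$, are scattered $C^{\ast}$-algebras. Fix $\lambda\in\Lambda$. The $C^{\ast}$-morphism $\varphi_{\lambda}:\bigl(Z(A[\tau_{\Gamma}])\bigr)_{\lambda}\to Z(A_{\lambda})$, $\varphi_{\lambda}\bigl(a+\ker(p_{\lambda}|_{Z(A[\tau_{\Gamma}])})\bigr)=a+\ker p_{\lambda}$, is isometric, hence injective. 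Because $A[\tau_{\Gamma}]$ has perfect center, the restriction to $Z(A[\tau_{\Gamma}])=\lim_{\leftarrow\mu}Z(A_{\mu})$ of the canonical projection onto $Z(A_{\lambda})$ is onto; that is, $\pi_{\lambda}^{A}\bigl(Z(A[\tau_{\Gamma}])\bigr)=Z(A_{\lambda})$, which says precisely that $\varphi_{\lambda}$ is surjective. Thus $\varphi_{\lambda}$ is an isomorphism and $Z(A_{\lambda})\cong\bigl(Z(A[\tau_{\Gamma}])\bigr)_{\lambda}$ is a scattered $C^{\ast}$-algebra. By \cite[Theorem 2.2]{K}, $A_{\lambda}$ is a scattered $C^{\ast}$-algebra for every $\lambda\in\Lambda$, and by Proposition~\ref{scatt}, $A[\tau_{\Gamma}]$ is a scattered locally $C^{\ast}$-algebra.
\end{proof}
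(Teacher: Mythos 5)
Your proof is correct and follows essentially the same route as the paper: $(1)\Rightarrow(2)$ via Corollaries \ref{type} and \ref{sub}, and $(2)\Rightarrow(1)$ by using perfectness of the center to identify each $Z(A_{\lambda})$ with the scattered factor $\bigl(Z(A[\tau_{\Gamma}])\bigr)_{\lambda}$, then applying \cite[Theorem 2.2]{K} and Proposition \ref{scatt}. You merely make explicit (via surjectivity of $\varphi_{\lambda}$) a step the paper's proof leaves implicit, which is fine.
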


\begin{proof}
$\left( 1\right) \Rightarrow \left( 2\right) .$ It follows from Corollaries %
\ref{sub} and \ref{type}.

$\left( 2\right) \Rightarrow \left( 1\right) .$ Since $Z(A[\tau _{\Gamma }])$
is a scattered locally $C^{\ast }$-algebra, the factors $\left( Z(A[\tau
_{\Gamma }])\right) _{\lambda },$ $\lambda \in \Lambda ,\ $in the
Arens-Michael decomposition of $Z(A[\tau _{\Gamma }])$ are scattered. On the
other hand, $Z(A[\tau _{\Gamma }])=\lim\limits_{\leftharpoonup \lambda
}Z(A_{\lambda })$, up to an isomorphism of locally $C^{\ast }$-algebras, and 
$\{Z(A_{\lambda });\pi _{\lambda \mu }^{A}|_{Z(A_{\lambda })}\}_{\lambda
,\mu \in \Lambda ,\lambda \geq \mu }$ is a perfect inverse system of $%
C^{\ast }$-algebras. Therefore, the $C^{\ast }$-algebras $Z\left( A_{\lambda
}\right) ,\lambda \in \Lambda ,$ are scattered. Since $A[\tau _{\Gamma }]$
is of type $I,$ the factors $A_{\lambda },\lambda \in \Lambda ,\ $in the
Arens-Michael decomposition of $A[\tau _{\Gamma }]$ are of type $I$. Then,
by \cite[Theorem 2.2]{K}, the $C^{\ast }$-algebras $A_{\lambda },\lambda \in
\Lambda ,$ are scattered, and by Proposition \ref{scatt}, $A[\tau _{\Gamma
}] $ is scattered.
\end{proof}

\section{Crossed products of scattered locally $C^{\ast }$-algebras}

Let $G$ be a locally compact group and let $A[\tau _{\Gamma }]$ be a locally 
$C^{\ast }$-algebra. An action of $G\ $on $A[\tau _{\Gamma }]$ is a group
morphism $\alpha $ from $G$ to Aut$\left( A[\tau _{\Gamma }]\right) $, the
group of all automorphisms of $A[\tau _{\Gamma }]$, such that, for each $%
a\in A$, the map $g\mapsto \alpha _{g}(a)\ $from $G$ to $A[\tau _{\Gamma }]$
is continuous. An action $\alpha $ of $G$ on $A[\tau _{\Gamma }]$ is an
inverse limit action if there is a cofinal subset $\Gamma ^{\prime }$ of $%
\Gamma $ with the property that $p_{\lambda }\left( \alpha _{g}\left(
a\right) \right) =p_{\lambda }\left( a\right) $ for all $a\in A$, for all $%
g\in G$ and for all $p_{\lambda }\in \Gamma ^{\prime }$. If $\alpha \ $is an
inverse limit action, we can suppose that $\Gamma ^{\prime }$ $=\Gamma $,
and then for each $\lambda \in \Lambda $, there is an action $\alpha
^{\lambda }$ of $G$ on $A_{\lambda }$ such that $\alpha
_{g}=\lim\limits_{\leftarrow \lambda }\alpha _{g}^{\lambda }$ for all $g\in
G $. If $G\ $is compact, then any action of $G$ on $A[\tau _{\Gamma }]$ is
an inverse limit action.

Recall that if $\alpha \ $is an inverse limit action of $G$ on $A[\tau
_{\Gamma }]$, then $L^{1}(G,\alpha ,A[\tau _{\Gamma }])$ $=\{f:G\rightarrow
A;\tint\limits_{G}p_{\lambda }\left( f(g)\right) dg<\infty $\ for all $%
\lambda \in \Lambda \}$, where $dg$ is the Haar measure on $G$, has a
structure of locally $m$-convex $\ast $-algebra with the convolution as
product and involution given by $f^{\#}(g)=\Delta (g^{-1})$ $\alpha
_{g}\left( f(g^{-1})^{\ast }\right) $, where $\Delta $ is the modular
function on $G$, and the topology given by the family of submultiplicative $%
\ast $-seminorms $\{N_{\lambda }\}_{\lambda }$, where $N_{\lambda
}(f)=\tint\limits_{G}p_{\lambda }\left( f(g)\right) dg.$ The crossed product
of $A[\tau _{\Gamma }]$ by $\alpha $, denoted by $A[\tau _{\Gamma }]$ $%
\times _{\alpha }G$, is the enveloping locally $C^{\ast }$-algebra of the
covariant algebra $L^{1}(G,\alpha ,A[\tau _{\Gamma }])$. Moreover, for each $%
\lambda \in \Lambda $, the $C^{\ast }$-algebras $\left( A[\tau _{\Gamma
}]\times _{\alpha }G\right) _{\lambda }$ and $A_{\lambda }$ $\times _{\alpha
^{\lambda }}G$ are isomorphic (see, \cite{J}).

As in the case of $C^{\ast }$-algebras, we have the following result.

\begin{proposition}
Let $G$ be a locally compact group and $A[\tau _{\Gamma }]$ a locally $%
C^{\ast }$-algebra. Then the crossed product $A[\tau _{\Gamma }]\times
_{\iota }G$ of $A[\tau _{\Gamma }]$ by the trivial action $\iota $ of $G$ is
scattered if and only if $A[\tau _{\Gamma }]$ and $C^{\ast }\left( G\right) $%
, the group $C^{\ast }$-algebra associated to $G$, are scattered.
\end{proposition}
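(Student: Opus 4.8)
The plan is to reduce the whole statement to the known $C^{\ast}$-algebra case via the Arens--Michael decomposition. The key structural fact, recalled in the excerpt, is that for an inverse limit action $\alpha$ one has $\left( A[\tau _{\Gamma }]\times _{\alpha }G\right) _{\lambda }\cong A_{\lambda }\times _{\alpha ^{\lambda }}G$ for every $\lambda \in \Lambda $ (see \cite{J}). Since the trivial action $\iota $ is obviously an inverse limit action (indeed $p_{\lambda }\left( \iota _{g}(a)\right) =p_{\lambda }(a)$ for all $g\in G$, $a\in A$, $\lambda \in \Lambda $) and $\iota ^{\lambda }$ is just the trivial action of $G$ on $A_{\lambda }$, we obtain $\left( A[\tau _{\Gamma }]\times _{\iota }G\right) _{\lambda }\cong A_{\lambda }\times _{\iota }G$ for each $\lambda \in \Lambda $.

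Next I would record the classical identification $B\times _{\iota }G\cong B\otimes _{\max }C^{\ast }(G)$ valid for every $C^{\ast }$-algebra $B$ (the crossed product by the trivial action is the maximal tensor product with the full group $C^{\ast }$-algebra). Applying this with $B=A_{\lambda }$ gives $\left( A[\tau _{\Gamma }]\times _{\iota }G\right) _{\lambda }\cong A_{\lambda }\otimes _{\max }C^{\ast }(G)$ for all $\lambda \in \Lambda $. By Proposition \ref{scatt}, $A[\tau _{\Gamma }]\times _{\iota }G$ is scattered if and only if each $C^{\ast }$-algebra $A_{\lambda }\otimes _{\max }C^{\ast }(G)$ is scattered. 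At this point I would invoke the $C^{\ast }$-algebra result on tensor products of scattered $C^{\ast }$-algebras (\cite[Proposition 1]{C}, the same one used in the proof of Corollary \ref{tens}): $A_{\lambda }\otimes _{\max }C^{\ast }(G)$ is scattered if and only if both $A_{\lambda }$ and $C^{\ast }(G)$ are scattered. Combining this over all $\lambda $, $A[\tau _{\Gamma }]\times _{\iota }G$ is scattered if and only if ($A_{\lambda }$ is scattered for every $\lambda \in \Lambda $) and $C^{\ast }(G)$ is scattered; by Proposition \ref{scatt} again the first condition is precisely scatteredness of $A[\tau _{\Gamma }]$, which finishes the proof.

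All the individual steps are essentially bookkeeping once the right references are in place. The one point that needs a little care is the ``only if'' direction, namely deducing scatteredness of each tensor factor from scatteredness of the maximal tensor product: this is exactly what \cite[Proposition 1]{C} delivers, but it implicitly requires the factors to be nonzero (if $A[\tau _{\Gamma }]=\{0\}$ then $A[\tau _{\Gamma }]\times _{\iota }G=\{0\}$ is vacuously scattered while $C^{\ast }(G)$ need not be), so one should read the statement under the standing assumption $A[\tau _{\Gamma }]\neq \{0\}$ or modulo this degenerate case. A secondary, purely technical matter is that the isomorphisms $\left( A[\tau _{\Gamma }]\times _{\iota }G\right) _{\lambda }\cong A_{\lambda }\otimes _{\max }C^{\ast }(G)$ are natural in $\lambda $ and hence compatible with the connecting $C^{\ast }$-morphisms; this is automatic from the naturality of the crossed product and tensor product constructions, and in any event is not needed for the scatteredness conclusion, which refers only to the individual factors in the Arens--Michael decomposition.
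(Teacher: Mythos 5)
Your argument is correct and essentially coincides with the paper's proof: the paper identifies $A[\tau _{\Gamma }]\times _{\iota }G$ with $A[\tau _{\Gamma }]\otimes _{\max }C^{\ast }(G)$ as locally $C^{\ast }$-algebras (citing \cite{J}) and then invokes Corollary \ref{tens}, whose own proof is precisely the factorwise reduction via Proposition \ref{scatt} and \cite[Proposition 1]{C} that you carry out by hand at the level of the Arens--Michael factors. Your caveat about the degenerate case $A[\tau _{\Gamma }]=\{0\}$ is a fair observation, but it applies equally to the paper's formulation and does not affect the substance of the argument.
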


\begin{proof}
The assertion follows from Corollary \ref{tens}, by taking into account that 
$A[\tau _{\Gamma }]\times _{\iota }G$ is isomorphic to the maximal tensor
product of $A[\tau _{\Gamma }]$ and $C^{\ast }\left( G\right) \ $(see, for
example, \cite{J}).
\end{proof}

The following result extends \cite[Proposition 6]{C}.

\begin{proposition}
Let $G$ be a compact group and $\alpha $ an action of $G$ on a locally $%
C^{\ast }$-algebra $A[\tau _{\Gamma }]$. If $A[\tau _{\Gamma }]$ is
scattered, then $A[\tau _{\Gamma }]$ $\times _{\alpha }G$ is scattered.
\end{proposition}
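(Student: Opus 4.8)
The plan is to reduce the statement to the known $C^{\ast }$-algebra case via the Arens-Michael decomposition, exactly as in the proof of Proposition \ref{scatt} and its corollaries. First I would use that, since $G$ is compact, the action $\alpha $ is automatically an inverse limit action, so one may suppose $\Gamma ^{\prime }=\Gamma $, and then for every $\lambda \in \Lambda $ there is an induced action $\alpha ^{\lambda }$ of $G$ on the $C^{\ast }$-algebra $A_{\lambda }$ with $\alpha _{g}=\lim\limits_{\leftarrow \lambda }\alpha _{g}^{\lambda }$ for all $g\in G$. Moreover, by the structure of crossed products of locally $C^{\ast }$-algebras recalled above (see \cite{J}), for each $\lambda \in \Lambda $ the factor $\left( A[\tau _{\Gamma }]\times _{\alpha }G\right) _{\lambda }$ in the Arens-Michael decomposition of $A[\tau _{\Gamma }]\times _{\alpha }G$ is isomorphic, as a $C^{\ast }$-algebra, to $A_{\lambda }\times _{\alpha ^{\lambda }}G$.

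Next I would invoke Proposition \ref{scatt}: since $A[\tau _{\Gamma }]$ is scattered, each factor $A_{\lambda }$, $\lambda \in \Lambda $, is a scattered $C^{\ast }$-algebra. Now for each fixed $\lambda $, the compact group $G$ acts on the scattered $C^{\ast }$-algebra $A_{\lambda }$ via $\alpha ^{\lambda }$, so by the $C^{\ast }$-algebra result \cite[Proposition 6]{C} the crossed product $A_{\lambda }\times _{\alpha ^{\lambda }}G$ is a scattered $C^{\ast }$-algebra. Combining this with the isomorphism $\left( A[\tau _{\Gamma }]\times _{\alpha }G\right) _{\lambda }\cong A_{\lambda }\times _{\alpha ^{\lambda }}G$ shows that every factor in the Arens-Michael decomposition of $A[\tau _{\Gamma }]\times _{\alpha }G$ is scattered. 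Applying Proposition \ref{scatt} once more, now in the converse direction, then yields that $A[\tau _{\Gamma }]\times _{\alpha }G$ is scattered.

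The only point that genuinely needs care is the identification of the Arens-Michael factors of the crossed product with the $C^{\ast }$-crossed products $A_{\lambda }\times _{\alpha ^{\lambda }}G$; this is precisely where compactness of $G$ enters, since it guarantees that $\alpha $ is an inverse limit action and hence that the enveloping locally $C^{\ast }$-algebra of $L^{1}(G,\alpha ,A[\tau _{\Gamma }])$ decomposes as an inverse limit of the $C^{\ast }$-crossed products $A_{\lambda }\times _{\alpha ^{\lambda }}G$. Granting that fact, which is already available from \cite{J}, the remainder of the argument is a routine transfer along Proposition \ref{scatt}, and I do not expect any further obstacle. (If one preferred not to quote \cite{J} for the decomposition, one could instead check directly that $L^{1}(G,\alpha ,A[\tau _{\Gamma }])=\lim\limits_{\leftarrow \lambda }L^{1}(G,\alpha ^{\lambda },A_{\lambda })$ and that forming the enveloping locally $C^{\ast }$-algebra is compatible with this inverse limit, but that seems unnecessary here.)
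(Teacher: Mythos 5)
Your proposal is correct and follows essentially the same route as the paper: compactness of $G$ makes $\alpha$ an inverse limit action, the factors $\left( A[\tau _{\Gamma }]\times _{\alpha }G\right) _{\lambda }$ are identified with $A_{\lambda }\times _{\alpha ^{\lambda }}G$ via \cite{J}, and then Proposition \ref{scatt} together with \cite[Proposition 6]{C} yields the conclusion. No gaps; this matches the paper's argument.
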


\begin{proof}
If $A[\tau _{\Gamma }]$ is scattered, then, for each $\lambda \in \Lambda
,A_{\lambda }$ is scattered (Proposition \ref{scatt}), and by \cite[%
Proposition 6]{C} $A_{\lambda }$ $\times _{\alpha ^{\lambda }}G$ is
scattered. From these facts and taking into account that for each $\lambda
\in \Lambda $, the $C^{\ast }$-algebras $\left( A[\tau _{\Gamma }]\times
_{\alpha }G\right) _{\lambda }$ and $A_{\lambda }$ $\times _{\alpha
^{\lambda }}G$ are isomorphic, we deduce that $A[\tau _{\Gamma }]$ $\times
_{\alpha }G$ is scattered.
\end{proof}

Let $\alpha =\lim\limits_{\leftarrow \lambda }\alpha ^{\lambda }$ be an
inverse limit action of a locally compact group $G$ on a pro-$C^{\ast }$%
-algebra $A[\tau _{\Gamma }]$ and $\left( A[\tau _{\Gamma }]\right) ^{\alpha
}=\{a\in A;\alpha _{g}\left( a\right) =a$ for all $g\in G\}\ $the fixed
point algebra of $A[\tau _{\Gamma }]$ under $\alpha $. Then $\left( A[\tau
_{\Gamma }]\right) ^{\alpha }$ is a locally $C^{\ast }$-subalgebra of $%
A[\tau _{\Gamma }]$. Since, for each $\lambda ,\mu \in \Lambda $ with $%
\lambda \geq \mu ,$ $\pi _{\lambda \mu }^{A}\left( \left( A_{\lambda
}\right) ^{\alpha ^{\lambda }}\right) $ $\subseteq \left( A_{\mu }\right)
^{\alpha ^{\mu }}$, 
\begin{equation*}
\{\left( A_{\lambda }\right) ^{\alpha ^{\lambda }};\pi _{\lambda \mu
}^{A}|_{\left( A_{\lambda }\right) ^{\alpha ^{\lambda }}}\}_{\lambda ,\mu
\in \Lambda ,\lambda \geq \mu }
\end{equation*}%
is an inverse system of $C^{\ast }$-algebras.

\begin{proposition}
Let $\alpha =\lim\limits_{\leftarrow \lambda }\alpha ^{\lambda }$ be an
inverse limit action of a locally compact group $G$ on a locally $C^{\ast }$%
-algebra $A[\tau _{\Gamma }]$. Then $\left( A[\tau _{\Gamma }]\right)
^{\alpha }=\lim\limits_{\leftharpoonup \lambda }\left( A_{\lambda }\right)
^{\alpha ^{\lambda }}$, up to an isomorphism of locally $C^{\ast }$-algebras.
\end{proposition}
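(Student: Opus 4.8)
The plan is to mimic the proof used earlier for the analogous statements about the center $Z(A[\tau_{\Gamma}])$ and about $L^1(G,\alpha,A[\tau_{\Gamma}])$, constructing an explicit isomorphism and checking it is a topological $\ast$-isomorphism. First I would define the map
\[
\Phi:\left(A[\tau_{\Gamma}]\right)^{\alpha}\rightarrow\lim\limits_{\leftharpoonup\lambda}\left(A_{\lambda}\right)^{\alpha^{\lambda}},\qquad\Phi(a)=\left(\pi_{\lambda}^{A}(a)\right)_{\lambda},
\]
and observe that it is well defined: if $\alpha_{g}(a)=a$ for all $g\in G$ then, applying $\pi_{\lambda}^{A}$ and using $\pi_{\lambda}^{A}\circ\alpha_{g}=\alpha_{g}^{\lambda}\circ\pi_{\lambda}^{A}$, we get $\alpha_{g}^{\lambda}(\pi_{\lambda}^{A}(a))=\pi_{\lambda}^{A}(a)$, so $\pi_{\lambda}^{A}(a)\in\left(A_{\lambda}\right)^{\alpha^{\lambda}}$, and the tuple is coherent because $\pi_{\lambda\mu}^{A}(\pi_{\lambda}^{A}(a))=\pi_{\mu}^{A}(a)$. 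Clearly $\Phi$ is a $\ast$-morphism, and it is a homeomorphism onto its image because $p_{\left(A_{\lambda}\right)^{\alpha^{\lambda}}}(\Phi(a))=\left\Vert\pi_{\lambda}^{A}(a)\right\Vert_{A_{\lambda}}=p_{\lambda}(a)=p_{\lambda}|_{\left(A[\tau_{\Gamma}]\right)^{\alpha}}(a)$ for every $\lambda$; in particular $\Phi$ is injective.

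The substantive step is surjectivity. Here I would argue exactly as in the proof for the center: given a coherent tuple $\left(a_{\lambda}\right)_{\lambda}$ in the inverse system $\{\left(A_{\lambda}\right)^{\alpha^{\lambda}};\pi_{\lambda\mu}^{A}|_{\left(A_{\lambda}\right)^{\alpha^{\lambda}}}\}$, it is in particular a coherent tuple in the Arens--Michael system $\{A_{\lambda};\pi_{\lambda\mu}^{A}\}$, so by the identification $A[\tau_{\Gamma}]=\lim\limits_{\leftarrow\lambda}A_{\lambda}$ there is $a\in A$ with $\pi_{\lambda}^{A}(a)=a_{\lambda}$ for all $\lambda$. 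It remains to check $a\in\left(A[\tau_{\Gamma}]\right)^{\alpha}$. For fixed $g\in G$ and every $\lambda$, $\pi_{\lambda}^{A}(\alpha_{g}(a))=\alpha_{g}^{\lambda}(\pi_{\lambda}^{A}(a))=\alpha_{g}^{\lambda}(a_{\lambda})=a_{\lambda}=\pi_{\lambda}^{A}(a)$, where the third equality uses that $a_{\lambda}\in\left(A_{\lambda}\right)^{\alpha^{\lambda}}$. Since this holds for all $\lambda$ and the seminorms $\{p_{\lambda}\}$ separate points, $\alpha_{g}(a)=a$; as $g$ was arbitrary, $a\in\left(A[\tau_{\Gamma}]\right)^{\alpha}$ and $\Phi(a)=\left(a_{\lambda}\right)_{\lambda}$. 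Hence $\Phi$ is an isomorphism of locally $C^{\ast}$-algebras.

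I expect the only real point requiring care is making sure the inverse-limit action hypothesis $\alpha=\lim\limits_{\leftarrow\lambda}\alpha^{\lambda}$ is used correctly, namely that the intertwining relation $\pi_{\lambda}^{A}\circ\alpha_{g}=\alpha_{g}^{\lambda}\circ\pi_{\lambda}^{A}$ holds for all $g\in G$ and all $\lambda\in\Lambda$ (which is exactly what "$\alpha_{g}=\lim\limits_{\leftarrow\lambda}\alpha_{g}^{\lambda}$'' means), and that $\left(A[\tau_{\Gamma}]\right)^{\alpha}$ is closed so that it is genuinely a locally $C^{\ast}$-subalgebra with the subspace topology given by $\{p_{\lambda}|_{(A[\tau_{\Gamma}])^{\alpha}}\}$; closedness is immediate since it is the intersection over $g\in G$ of the kernels of the continuous maps $a\mapsto\alpha_{g}(a)-a$. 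Everything else is the routine verification that a $\ast$-preserving, seminorm-preserving bijection between two locally $C^{\ast}$-algebras is a topological $\ast$-isomorphism.
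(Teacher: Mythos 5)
Your proposal is correct and follows essentially the same route as the paper: the same map $a\mapsto \left( \pi _{\lambda }^{A}\left( a\right) \right) _{\lambda }$, the same seminorm-preservation argument, and the same surjectivity step of lifting a coherent family to $a\in A$ and using $\pi _{\lambda }^{A}\left( \alpha _{g}\left( a\right) -a\right) =0$ for all $\lambda $ to conclude $a\in \left( A[\tau _{\Gamma }]\right) ^{\alpha }$. Your added checks (well-definedness via the intertwining relation and closedness of the fixed-point algebra) are fine and merely make explicit what the paper leaves implicit.
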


\begin{proof}
Consider the map $\Psi :\left( A[\tau _{\Gamma }]\right) ^{\alpha
}\rightarrow \lim\limits_{\leftharpoonup \lambda }\left( A_{\lambda }\right)
^{\alpha ^{\lambda }}$given by 
\begin{equation*}
\Phi \left( a\right) =\left( \pi _{\lambda }^{A}\left( a\right) \right)
_{\lambda }\text{.}
\end{equation*}%
Clearly,$\ \Psi $ is a $\ast $-morphism$\ $and $p_{\left( A_{\lambda
}\right) ^{\alpha ^{\lambda }}}\left( \Psi \left( a\right) \right)
=p_{\lambda }|_{\left( A[\tau _{\Gamma }]\right) ^{\alpha }}\left( a\right) $
for all $a\in \left( A[\tau _{\Gamma }]\right) ^{\alpha }$ and for all $%
\lambda \in \Lambda $. If $\left( a_{\lambda }\right) _{\lambda }$is a
coherent sequence in $\{\left( A_{\lambda }\right) ^{\alpha ^{\lambda }};\pi
_{\lambda \mu }^{A}|_{\left( A_{\lambda }\right) ^{\alpha ^{\lambda
}}}\}_{\lambda ,\mu \in \Lambda ,\lambda \geq \mu }$, then there is $a\in A$
such that $\pi _{\lambda }^{A}\left( a\right) =a_{\lambda }\ $for all $%
\lambda \in \Lambda $. From $\pi _{\lambda }^{A}\left( \alpha _{g}\left(
a\right) -a\right) =\alpha _{g}^{\lambda }\left( \pi _{\lambda }^{A}\left(
a\right) \right) -\pi _{\lambda }^{A}\left( a\right) =0\ $for all $\lambda
\in \Lambda $, we deduce that $a\in \left( A[\tau _{\Gamma }]\right)
^{\alpha }$, and so $\Psi $ is surjective. Therefore, $\Psi $ is an
isomorphism \bigskip of locally $C^{\ast }$-algebras.
\end{proof}

\begin{remark}
We remark that, in general, the isometric $C^{\ast }$-morphism $\psi
_{\lambda }:\left( \left( A[\tau _{\Gamma }]\right) ^{\alpha }\right)
_{\lambda }$ $\rightarrow \left( A_{\lambda }\right) ^{\alpha ^{\lambda
}},\psi _{\lambda }\left( a+\ker \left( p_{\lambda }|_{\left( A[\tau
_{\Gamma }]\right) ^{\alpha }}\right) \right) =a+\ker p_{\lambda }$ is not
onto.
\end{remark}

By \cite[Theorem 3.2]{K}, the crossed product $A\times _{\alpha }G$ of a $%
C^{\ast }$-algebra $A$ by an action $\alpha $ of a compact abelian group $G$
is a scattered $C^{\ast }$-algebra if and only if $A^{\alpha }$ is a
scattered $C^{\ast }$-algebra. We do not know if this result is true in the
context of locally $C^{\ast }$-algebras, but we can prove the following
results.

\begin{proposition}
\label{fixe} Let $G$ be a compact abelian group and $\alpha $ an action of $%
G $ on a locally $C^{\ast }$-algebra $A[\tau _{\Gamma }]$. If $A\times
_{\alpha }G$ is scattered, then $\left( A[\tau _{\Gamma }]\right) ^{\alpha }$
is scattered.
\end{proposition}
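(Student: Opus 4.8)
The plan is to reduce the statement to the corresponding $C^{\ast }$-algebra fact, namely \cite[Theorem 3.2]{K}, by working level by level in the Arens-Michael decomposition. First I would note that if $A[\tau _{\Gamma }]\times _{\alpha }G$ is scattered, then by Proposition \ref{scatt} each factor $\left( A[\tau _{\Gamma }]\times _{\alpha }G\right) _{\lambda }$ in its Arens-Michael decomposition is a scattered $C^{\ast }$-algebra, and since these factors are isomorphic to $A_{\lambda }\times _{\alpha ^{\lambda }}G$, it follows that $A_{\lambda }\times _{\alpha ^{\lambda }}G$ is scattered for every $\lambda \in \Lambda $. Here I am using implicitly that $G$ is compact, so $\alpha $ is automatically an inverse limit action and the factoring $\alpha =\lim\limits_{\leftarrow \lambda }\alpha ^{\lambda }$ is available.

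Next, for each $\lambda \in \Lambda $ the group $G$ is compact abelian and $A_{\lambda }$ is a $C^{\ast }$-algebra on which $\alpha ^{\lambda }$ acts, so \cite[Theorem 3.2]{K} applies and yields that the fixed point algebra $\left( A_{\lambda }\right) ^{\alpha ^{\lambda }}$ is a scattered $C^{\ast }$-algebra. Thus every term in the inverse system $\{\left( A_{\lambda }\right) ^{\alpha ^{\lambda }};\pi _{\lambda \mu }^{A}|_{\left( A_{\lambda }\right) ^{\alpha ^{\lambda }}}\}_{\lambda ,\mu \in \Lambda ,\lambda \geq \mu }$ is scattered. By the proposition identifying $\left( A[\tau _{\Gamma }]\right) ^{\alpha }$ with $\lim\limits_{\leftharpoonup \lambda }\left( A_{\lambda }\right) ^{\alpha ^{\lambda }}$, the locally $C^{\ast }$-algebra $\left( A[\tau _{\Gamma }]\right) ^{\alpha }$ is a topological $\ast $-isomorph of this inverse limit.

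The last step is to conclude that $\left( A[\tau _{\Gamma }]\right) ^{\alpha }$ is scattered. The natural way is to apply Proposition \ref{scatt} again: I would check that the factors in the Arens-Michael decomposition of $\left( A[\tau _{\Gamma }]\right) ^{\alpha }$ are scattered. This is where the anticipated obstacle lies, since, as the preceding remark points out, the canonical isometric $C^{\ast }$-morphism $\left( \left( A[\tau _{\Gamma }]\right) ^{\alpha }\right) _{\lambda }\rightarrow \left( A_{\lambda }\right) ^{\alpha ^{\lambda }}$ need not be onto; so the factor $\left( \left( A[\tau _{\Gamma }]\right) ^{\alpha }\right) _{\lambda }$ is only a $C^{\ast }$-subalgebra of the scattered $C^{\ast }$-algebra $\left( A_{\lambda }\right) ^{\alpha ^{\lambda }}$. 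Fortunately, a $C^{\ast }$-subalgebra of a scattered $C^{\ast }$-algebra is scattered (the fact cited at \cite[p. 677]{K1}, already invoked in the proof of Corollary \ref{sub}), so each factor of $\left( A[\tau _{\Gamma }]\right) ^{\alpha }$ is scattered, and Proposition \ref{scatt} gives that $\left( A[\tau _{\Gamma }]\right) ^{\alpha }$ is scattered. In effect, the whole argument is: descend to the $C^{\ast }$-level, invoke \cite[Theorem 3.2]{K} on each factor, and lift back using Proposition \ref{scatt} together with the hereditary behaviour of scatteredness under $C^{\ast }$-subalgebras.
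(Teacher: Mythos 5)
Your proposal is correct and follows essentially the same route as the paper's own proof: pass to the factors $A_{\lambda }\times _{\alpha ^{\lambda }}G$, apply \cite[Theorem 3.2]{K} to get that each $\left( A_{\lambda }\right) ^{\alpha ^{\lambda }}$ is scattered, observe that $\left( \left( A[\tau _{\Gamma }]\right) ^{\alpha }\right) _{\lambda }$ is a $C^{\ast }$-subalgebra of $\left( A_{\lambda }\right) ^{\alpha ^{\lambda }}$ and hence scattered, and conclude via Proposition \ref{scatt}. You merely make explicit a few steps the paper leaves implicit (the identification of the factors of the crossed product and the hereditary property of scatteredness under $C^{\ast }$-subalgebras), which is fine.
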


\begin{proof}
If $A\times _{\alpha }G$ is scattered, by Proposition \ref{scatt} and by 
\cite[Theorem 3.2]{K} , $\left( A_{\lambda }\right) ^{\alpha ^{\lambda
}},\lambda \in \Lambda $, are scattered. Since for each $\lambda \in \Lambda
,\left( \left( A[\tau _{\Gamma }]\right) ^{\alpha }\right) _{\lambda }$ is a 
$C^{\ast }$-subalgebra of $\left( A_{\lambda }\right) ^{\alpha ^{\lambda }}$%
, $\left( \left( A[\tau _{\Gamma }]\right) ^{\alpha }\right) _{\lambda }$ is
scattered and so $\left( A[\tau _{\Gamma }]\right) ^{\alpha }$ is scattered.
\end{proof}

\begin{definition}
We say that an inverse limit action $\alpha =\lim\limits_{\leftarrow \lambda
}\alpha ^{\lambda }$ of a locally compact group $G$ on a pro-$C^{\ast }$%
-algebra $A[\tau _{\Gamma }]$ is perfect, if $\{\left( A_{\lambda }\right)
^{\alpha ^{\lambda }};\pi _{\lambda \mu }^{A}|_{\left( A_{\lambda }\right)
^{\alpha ^{\lambda }}}\}_{\lambda ,\mu \in \Lambda ,\lambda \geq \mu }$ is a
perfect inverse system of $C^{\ast }$-algebras.
\end{definition}

\begin{example}
Let $G$\ be a locally compact group and $A[\tau _{\Gamma }]$\ a locally $%
C^{\ast }$-algebra. The action $\delta $\ of $G$\ on the locally $C^{\ast }$%
-algebra $C_{0}\left( G,A\right) $\ of all continuos functions from $G$\ to $%
A$\ vanishing to infinite, given by $\delta _{g}\left( f\right) \left(
t\right) =f\left( tg\right) \ $for all $g,t\in G$, is an inverse limit
action, $\delta =\lim\limits_{\leftarrow \lambda }\delta ^{\lambda }$, where 
$\delta ^{\lambda }$\ is the action of $G$\ on $C_{0}\left( G,A_{\lambda
}\right) ,$\ given by $\delta _{g}^{\lambda }\left( f\right) \left( t\right)
=f\left( tg\right) \ $for all $g,t\in G.$\ Moreover, $\delta $\ is perfect,
since the fixed point algebra of $C_{0}\left( G,A\right) $\ under $\delta $\
is isomorphic with $A.$
\end{example}

Under perfectness of the action, we succeed the inverse statement of
Proposition \ref{fixe}.

\begin{theorem}
Let $G$ be a compact abelian group and $\alpha $ a perfect action of $G$ on
a locally $C^{\ast }$-algebra $A[\tau _{\Gamma }]$. Then the following
statements are equivalent:

\begin{enumerate}
\item $A\times _{\alpha }G$ is scattered;

\item $\left( A[\tau _{\Gamma }]\right) ^{\alpha }$ is scattered.
\end{enumerate}
\end{theorem}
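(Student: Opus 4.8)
The plan is to reduce everything to the $C^\ast$-algebra case via the Arens--Michael decomposition, exactly as in the previous propositions. The implication $(1)\Rightarrow(2)$ is already Proposition \ref{fixe} and needs no perfectness, so the real content is $(2)\Rightarrow(1)$. Assume $\left(A[\tau_\Gamma]\right)^\alpha$ is scattered. By Proposition \ref{scatt} applied to $\left(A[\tau_\Gamma]\right)^\alpha$, the factors $\left(\left(A[\tau_\Gamma]\right)^\alpha\right)_\lambda$, $\lambda\in\Lambda$, in its Arens--Michael decomposition are scattered $C^\ast$-algebras. On the other hand, by the proposition identifying $\left(A[\tau_\Gamma]\right)^\alpha$ with $\lim\limits_{\leftharpoonup\lambda}\left(A_\lambda\right)^{\alpha^\lambda}$, and since $\alpha$ is a perfect action, the inverse system $\{\left(A_\lambda\right)^{\alpha^\lambda};\pi_{\lambda\mu}^A|_{\left(A_\lambda\right)^{\alpha^\lambda}}\}$ is perfect; hence the restriction of each canonical projection maps $\left(A[\tau_\Gamma]\right)^\alpha$ onto $\left(A_\lambda\right)^{\alpha^\lambda}$. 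This forces the isometric $C^\ast$-morphism $\psi_\lambda:\left(\left(A[\tau_\Gamma]\right)^\alpha\right)_\lambda\to\left(A_\lambda\right)^{\alpha^\lambda}$ from the preceding remark to be onto, hence an isomorphism, so each $\left(A_\lambda\right)^{\alpha^\lambda}$ is a scattered $C^\ast$-algebra.

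Next I would invoke the $C^\ast$-algebra result \cite[Theorem 3.2]{K}: for each $\lambda\in\Lambda$, since $\left(A_\lambda\right)^{\alpha^\lambda}$ is scattered and $G$ is a compact abelian group acting on the $C^\ast$-algebra $A_\lambda$ by $\alpha^\lambda$, the crossed product $A_\lambda\times_{\alpha^\lambda}G$ is a scattered $C^\ast$-algebra. Finally, using the identification (from \cite{J}) of $\left(A[\tau_\Gamma]\times_\alpha G\right)_\lambda$ with $A_\lambda\times_{\alpha^\lambda}G$, the factors in the Arens--Michael decomposition of $A[\tau_\Gamma]\times_\alpha G$ are all scattered, so by Proposition \ref{scatt} the locally $C^\ast$-algebra $A[\tau_\Gamma]\times_\alpha G$ is scattered. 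This closes the equivalence.

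The step I expect to be the main obstacle is establishing that perfectness of the action genuinely upgrades $\psi_\lambda$ to an isomorphism — i.e. that the factors of the fixed-point algebra coincide with the fixed-point algebras of the factors. Without perfectness one only has $\left(\left(A[\tau_\Gamma]\right)^\alpha\right)_\lambda$ sitting as a $C^\ast$-subalgebra of $\left(A_\lambda\right)^{\alpha^\lambda}$ (as used in Proposition \ref{fixe}), and scatteredness does not in general pass from a subalgebra upward. So the argument must carefully use that the canonical projection $\pi_\lambda^A|_{\left(A[\tau_\Gamma]\right)^\alpha}:\left(A[\tau_\Gamma]\right)^\alpha\to\left(A_\lambda\right)^{\alpha^\lambda}$ is surjective (the defining property of a perfect inverse system), which factors through $\left(\left(A[\tau_\Gamma]\right)^\alpha\right)_\lambda$ via the isometric map $\psi_\lambda$, whence $\psi_\lambda$ has dense range and, being isometric with closed range, is onto. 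Everything else is a routine transfer along the now-standard dictionary between a locally $C^\ast$-algebra and its Arens--Michael factors.
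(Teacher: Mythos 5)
Your proof is correct and follows essentially the same route as the paper: reduce to the Arens--Michael factors, use perfectness of the action to identify $\left(\left(A[\tau_{\Gamma}]\right)^{\alpha}\right)_{\lambda}$ with $\left(A_{\lambda}\right)^{\alpha^{\lambda}}$, apply \cite[Theorem 3.2]{K} factorwise, and conclude with Proposition \ref{scatt} via the identification $\left(A[\tau_{\Gamma}]\times_{\alpha}G\right)_{\lambda}\cong A_{\lambda}\times_{\alpha^{\lambda}}G$. Your explicit argument that surjectivity of the canonical projections forces $\psi_{\lambda}$ to be an isometric isomorphism merely spells out what the paper leaves implicit, so no further changes are needed.
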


\begin{proof}
$\left( 1\right) \Rightarrow \left( 2\right) .$ It follows from Proposition %
\ref{fixe}.

$\left( 2\right) \Rightarrow \left( 1\right) .$ Since $\left( A[\tau
_{\Gamma }]\right) ^{\alpha }$\ is a scattered locally $C^{\ast }$-algebra,
the factors $\left( \left( A[\tau _{\Gamma }]\right) ^{\alpha }\right)
_{\lambda },$ $\lambda \in \Lambda ,$ in the Arens-Michael decomposition of $%
\left( A[\tau _{\Gamma }]\right) ^{\alpha }$, are scattered. On the other
hand, $\left( A[\tau _{\Gamma }]\right) ^{\alpha
}=\lim\limits_{\leftharpoonup \lambda }\left( A_{\lambda }\right) ^{\alpha
^{\lambda }}$, up to an isomorphism of locally $C^{\ast }$-algebras, and $%
\{\left( A_{\lambda }\right) ^{\alpha ^{\lambda }};\pi _{\lambda \mu
}^{A}|_{\left( A_{\lambda }\right) ^{\alpha ^{\lambda }}}\}_{\lambda ,\mu
\in \Lambda ,\lambda \geq \mu }$ is a perfect inverse system of $C^{\ast }$%
-algebras. Therefore, the fixed point algebras $\left( A_{\lambda }\right)
^{\alpha ^{\lambda }}$ of $A_{\lambda }$ under $\alpha ^{\lambda },\lambda
\in \Lambda ,$ are scattered. Then, by \cite[Theorem 3.2]{K}, the factors $%
A_{\lambda }\times _{\alpha ^{\lambda }}G,\lambda \in \Lambda ,$ in the
Arens-Michael decomposition of $A\times _{\alpha }G$ , are scattered and by
Proposition \ref{scatt}, $A\times _{\alpha }G$ is scattered.
\end{proof}

Let $\alpha =\lim\limits_{\leftarrow \lambda }\alpha ^{\lambda }$ be an
inverse limit action of a locally compact group $G$ on a locally $C^{\ast }$%
-algebra $A[\tau _{\Gamma }]$. For each $g\in G$,$\ $the restriction of $%
\alpha _{g}$ on $b\left( A[\tau _{\Gamma }]\right) $, the $C^{\ast }$%
-algebra of all bounded elements of $A[\tau _{\Gamma }]$, is an automorphism
of $b\left( A[\tau _{\Gamma }]\right) \ $and the map $g\mapsto \alpha
_{g}|_{b\left( A[\tau _{\Gamma }]\right) }$ from $G$ to Aut$\left( b\left(
A[\tau _{\Gamma }]\right) \right) $ is a group morphism. In general, the map 
$g\mapsto \alpha _{g}|_{b\left( A[\tau _{\Gamma }]\right) }$ from $G$ to Aut$%
\left( b\left( A[\tau _{\Gamma }]\right) \right) $ is not an action of $G$
on $b\left( A[\tau _{\Gamma }]\right) $, since for a fixed element $a\in
b(A[\tau _{\Gamma }])$, the map $g\mapsto $ $\alpha _{g}(a)\ $from $G$ to $%
b\left( A[\tau _{\Gamma }]\right) $ is not always continuous. We remark that 
$\beta $ is an action of $G$ on $b\left( A[\tau _{\Gamma }]\right) $ such
that the closed two-sided $\ast $-ideals $\ker p_{\lambda }|_{b(A[\tau
_{\Gamma }])}$, $\lambda \in \Lambda $, are $\beta $ -invariant, then $\beta 
$ extends to an action $g\mapsto \widetilde{\beta }_{g}$ of $G$ on $A[\tau
_{\Gamma }],$ where $\widetilde{\beta }_{g}$ is the extension of the $\ $%
automorphism $\beta _{g}$ of $b\left( A[\tau _{\Gamma }]\right) $ to an $\ $%
automorphism of $A[\tau _{\Gamma }]$.

Suppose that for each $a\in b(A[\tau _{\Gamma }])$, the map $g\mapsto $ $%
\alpha _{g}(a)\ $from $G$ to $b\left( A[\tau _{\Gamma }]\right) $ is
continuous. Then $\alpha |_{b(A[\tau _{\Gamma }])}$ is an action of $G$ on $%
b(A[\tau _{\Gamma }])$ and the closed two-sided $\ast $-ideals $\ker
p_{\lambda }|_{b(A[\tau _{\Gamma }])}$, $\lambda \in \Lambda $, are $\alpha
|_{b(A[\tau _{\Gamma }])}$ -invariant. Therefore, for each $\lambda \in
\Lambda ,\alpha |_{b(A[\tau _{\Gamma }])}$ induces an action of $G$ on $%
b(A[\tau _{\Gamma }])/\ker p_{\lambda }|_{b(A[\tau _{\Gamma }])},$ denoted
by $\alpha |_{b(A[\tau _{\Gamma }])}^{\lambda }$, and the $C^{\ast }$%
-algebras $(b(A[\tau _{\Gamma }])\times _{\alpha |_{b(A[\tau _{\Gamma
}])}}G)/\ker p_{\lambda }|_{b(A[\tau _{\Gamma }])}\times _{\alpha
|_{b(A[\tau _{\Gamma }])}}G$ \ and $b(A[\tau _{\Gamma }])/\ker p_{\lambda
}|_{b(A[\tau _{\Gamma }])}\times _{\alpha |_{b(A[\tau _{\Gamma }])}^{\lambda
}}G$ are isomorphic (see, for example, \cite[IV.3.5.8]{B}).

\begin{lemma}
\label{bounded}Let $\alpha =\lim\limits_{\leftarrow \lambda }\alpha
^{\lambda }$ be an inverse limit action of a locally compact group $G$ on a
locally $C^{\ast }$-algebra $A[\tau _{\Gamma }]$ such that for each $a\in
b(A[\tau _{\Gamma }])$, the map $g\mapsto $ $\alpha _{g}(a)\ $from $G$ to $%
b\left( A[\tau _{\Gamma }]\right) $ is continuous. Then, for each $\lambda
\in \Lambda $,$\ $the $C^{\ast }$-algebras $\left( b(A[\tau _{\Gamma
}])\times _{\alpha |_{b(A[\tau _{\Gamma }])}}G\right) /\ker p_{\lambda
}|_{b(A[\tau _{\Gamma }])}\times _{\alpha |_{b(A[\tau _{\Gamma }])}}G$ and $%
A_{\lambda }\times _{\alpha ^{\lambda }}G\ $are isomorphic.
\end{lemma}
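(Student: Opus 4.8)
### Plan for the proof of Lemma \ref{bounded}

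The plan is to build the isomorphism by composing two identifications, each of which is a known structural fact. First I would recall the setup: since $\alpha|_{b(A[\tau_\Gamma])}$ is an honest action of $G$ on the $C^\ast$-algebra $b(A[\tau_\Gamma])$, and since each $\ker p_\lambda|_{b(A[\tau_\Gamma])}$ is an $\alpha|_{b(A[\tau_\Gamma])}$-invariant closed two-sided $\ast$-ideal, the crossed product $b(A[\tau_\Gamma])\times_{\alpha|_{b(A[\tau_\Gamma])}}G$ contains $\ker p_\lambda|_{b(A[\tau_\Gamma])}\times_{\alpha|_{b(A[\tau_\Gamma])}}G$ as a closed two-sided $\ast$-ideal, and by the quotient formula for crossed products by invariant ideals (the fact cited just before the lemma from \cite[IV.3.5.8]{B}) the quotient
\begin{equation*}
\left(b(A[\tau_\Gamma])\times_{\alpha|_{b(A[\tau_\Gamma])}}G\right)/\ker p_\lambda|_{b(A[\tau_\Gamma])}\times_{\alpha|_{b(A[\tau_\Gamma])}}G
\end{equation*}
is $\ast$-isomorphic to $\left(b(A[\tau_\Gamma])/\ker p_\lambda|_{b(A[\tau_\Gamma])}\right)\times_{\alpha|_{b(A[\tau_\Gamma])}^\lambda}G$, where $\alpha|_{b(A[\tau_\Gamma])}^\lambda$ is the induced action on the quotient.

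Next I would use the identification, recalled in the excerpt immediately before Proposition \ref{bond}, that the $C^\ast$-algebra $b(A[\tau_\Gamma])/\ker p_\lambda|_{b(A[\tau_\Gamma])}$ is $\ast$-isomorphic to $A_\lambda$ via the map $a+\ker p_\lambda|_{b(A[\tau_\Gamma])}\mapsto \pi_\lambda^A(a)$ (the composite $b(A[\tau_\Gamma])\hookrightarrow A\xrightarrow{\pi_\lambda^A}A_\lambda$ has kernel exactly $\ker p_\lambda|_{b(A[\tau_\Gamma])}$ and dense, hence onto, range into the $C^\ast$-algebra $A_\lambda$). The key compatibility I must verify is that this isomorphism intertwines $\alpha|_{b(A[\tau_\Gamma])}^\lambda$ with $\alpha^\lambda$: for $g\in G$ and $a\in b(A[\tau_\Gamma])$ one has $\pi_\lambda^A(\alpha_g(a)) = \alpha_g^\lambda(\pi_\lambda^A(a))$ because $\alpha=\lim_{\leftarrow\lambda}\alpha^\lambda$, and this is precisely the statement that the isomorphism is $G$-equivariant. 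An equivariant $\ast$-isomorphism of the coefficient $C^\ast$-algebras induces a $\ast$-isomorphism of the full crossed products, so
\begin{equation*}
\left(b(A[\tau_\Gamma])/\ker p_\lambda|_{b(A[\tau_\Gamma])}\right)\times_{\alpha|_{b(A[\tau_\Gamma])}^\lambda}G \;\cong\; A_\lambda\times_{\alpha^\lambda}G.
\end{equation*}
Composing the two displayed isomorphisms gives the claim.

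The main obstacle, and the step deserving the most care, is the equivariance of the coefficient isomorphism together with checking that the induced action $\alpha|_{b(A[\tau_\Gamma])}^\lambda$ is genuinely a (strongly continuous) action of $G$ on the quotient $C^\ast$-algebra so that its full crossed product is defined — this rests on the hypothesis that $g\mapsto\alpha_g(a)$ is continuous for each $a\in b(A[\tau_\Gamma])$, which makes $\alpha|_{b(A[\tau_\Gamma])}$ a bona fide point-norm continuous action, and invariance of $\ker p_\lambda|_{b(A[\tau_\Gamma])}$ then passes this continuity to the quotient. Everything else is a routine invocation of the functoriality of the full crossed product under equivariant surjections and under equivariant isomorphisms, together with the two structural facts already recorded in the excerpt. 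I would therefore present the argument as: apply the quotient-by-invariant-ideal formula, then transport along the equivariant coefficient isomorphism, and conclude.
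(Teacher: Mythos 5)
Your proposal is correct and follows essentially the same route as the paper: identify $b(A[\tau _{\Gamma }])/\ker p_{\lambda }|_{b(A[\tau _{\Gamma }])}$ with $A_{\lambda }$, check that this isomorphism intertwines the induced quotient action with $\alpha ^{\lambda }$, pass to crossed products, and combine with the quotient-by-invariant-ideal identification already recorded before the lemma. The only difference is cosmetic ordering of the two identifications, so no further comment is needed.
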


\begin{proof}
Take $\lambda \in \Lambda $. The map $\varphi _{\lambda }:b(A[\tau _{\Gamma
}])/\ker p_{\lambda }|_{b(A[\tau _{\Gamma }])}\rightarrow A[\tau _{\Gamma
}])/\ker p_{\lambda }$, given by 
\begin{equation*}
\varphi _{\lambda }\left( a+\ker p_{\lambda }|_{b(A[\tau _{\Gamma
}])}\right) =a+\ker p_{\lambda }
\end{equation*}%
is a $C^{\ast }$-isomorphism (see, for example, \cite[Theorem 10.24]{F} ).
Moreover, 
\begin{equation*}
\varphi _{\lambda }\left( \left( \alpha |_{b(A[\tau _{\Gamma }])}\right)
_{g}\left( a+\ker p_{\lambda }|_{b(A[\tau _{\Gamma }])}\right) \right)
=\alpha _{g}^{\lambda }\left( \varphi _{\lambda }\left( a+\ker p_{\lambda
}|_{b(A[\tau _{\Gamma }])}\right) \right)
\end{equation*}%
for all $a\in $ $b(A[\tau _{\Gamma }])$ and for all $g\in G$. Thus, there is
a $C^{\ast }$-isomorphism $\Phi _{\lambda }:b(A[\tau _{\Gamma }])/\ker
p_{\lambda }|_{b(A[\tau _{\Gamma }])}\times _{\alpha |_{b(A[\tau _{\Gamma
}])}^{\lambda }}G$ $\rightarrow $ $A_{\lambda }\times _{\alpha ^{\lambda }}G$
such that 
\begin{equation*}
\Phi _{\lambda }\left( \left( a+\ker p_{\lambda }|_{b(A[\tau _{\Gamma
}])}\right) \otimes f\right) =\varphi _{\lambda }\left( a+\ker p_{\lambda
}|_{b(A[\tau _{\Gamma }])}\right) \otimes f
\end{equation*}%
for all $a\in $ $b(A[\tau _{\Gamma }])$ and for all $f\in C_{c}(G)$.
Therefore, the $C^{\ast }$-algebras $A_{\lambda }\times _{\alpha ^{\lambda
}}G$ and $\left( b(A[\tau _{\Gamma }])\times _{\alpha |_{b(A[\tau _{\Gamma
}])}}G\right) /\ker p_{\lambda }|_{b(A[\tau _{\Gamma }])}\times _{\alpha
|_{b(A[\tau _{\Gamma }])}}G\ $are isomorphic.
\end{proof}

\begin{proposition}
\label{inv} Let $\alpha =\lim\limits_{\leftarrow \lambda }\alpha ^{\lambda }$
be an inverse limit action of a locally compact group $G$ on a locally $%
C^{\ast }$-algebra $A[\tau _{\Gamma }]$ such that for each $a\in b(A[\tau
_{\Gamma }])$, the map $g\mapsto $ $\alpha _{g}(a)\ $from $G$ to $b\left(
A[\tau _{\Gamma }]\right) $ is continuous. If $b(A[\tau _{\Gamma }])\times
_{\alpha |_{b(A[\tau _{\Gamma }])}}G$ is scattered, then $A\times _{\alpha
}G $ is scattered.
\end{proposition}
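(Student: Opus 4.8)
The plan is to reduce the statement to the $C^{\ast }$-algebra level via the Arens--Michael decomposition and then to invoke the behaviour of scattered $C^{\ast }$-algebras under quotients by closed two-sided $\ast $-ideals. First I would recall, from Proposition \ref{scatt}, that $A\times _{\alpha }G$ is scattered if and only if every factor $\left( A\times _{\alpha }G\right) _{\lambda }$ in its Arens--Michael decomposition is a scattered $C^{\ast }$-algebra, and, as recalled at the beginning of this section, that $\left( A\times _{\alpha }G\right) _{\lambda }$ is isomorphic with $A_{\lambda }\times _{\alpha ^{\lambda }}G$. Hence it suffices to prove that $A_{\lambda }\times _{\alpha ^{\lambda }}G$ is a scattered $C^{\ast }$-algebra for every $\lambda \in \Lambda $.

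Next I would apply Lemma \ref{bounded}: for each $\lambda \in \Lambda $ the $C^{\ast }$-algebra $A_{\lambda }\times _{\alpha ^{\lambda }}G$ is isomorphic with
\[
\left( b(A[\tau _{\Gamma }])\times _{\alpha |_{b(A[\tau _{\Gamma }])}}G\right) \Big/ \left( \ker p_{\lambda }|_{b(A[\tau _{\Gamma }])}\times _{\alpha |_{b(A[\tau _{\Gamma }])}}G\right) ,
\]
that is, with a quotient of $b(A[\tau _{\Gamma }])\times _{\alpha |_{b(A[\tau _{\Gamma }])}}G$ by a closed two-sided $\ast $-ideal. By hypothesis $b(A[\tau _{\Gamma }])\times _{\alpha |_{b(A[\tau _{\Gamma }])}}G$ is a scattered $C^{\ast }$-algebra, and a quotient of a scattered $C^{\ast }$-algebra by a closed two-sided $\ast $-ideal is again scattered, by \cite[Proposition 2.4]{J1}. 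Consequently $A_{\lambda }\times _{\alpha ^{\lambda }}G$ is scattered for every $\lambda \in \Lambda $, and a second application of Proposition \ref{scatt} yields that $A\times _{\alpha }G$ is scattered.

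I do not expect a serious obstacle here, since the structural work has already been packaged into Lemma \ref{bounded} and Proposition \ref{scatt}; the only point requiring a little care is to be sure that $\ker p_{\lambda }|_{b(A[\tau _{\Gamma }])}\times _{\alpha |_{b(A[\tau _{\Gamma }])}}G$ is genuinely a closed two-sided $\ast $-ideal of $b(A[\tau _{\Gamma }])\times _{\alpha |_{b(A[\tau _{\Gamma }])}}G$ and that the identification in Lemma \ref{bounded} is precisely as a quotient by this ideal. Both facts follow from the $\alpha |_{b(A[\tau _{\Gamma }])}$-invariance of the ideals $\ker p_{\lambda }|_{b(A[\tau _{\Gamma }])}$, established in the paragraph preceding Lemma \ref{bounded}, together with the standard fact that forming crossed products commutes with passing to quotients by invariant ideals \cite[IV.3.5.8]{B}.
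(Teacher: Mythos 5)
Your proof is correct and follows essentially the same route as the paper: the hypothesis plus the fact that quotients of scattered $C^{\ast}$-algebras are scattered (\cite[Proposition 2.4]{J1}) gives that each quotient in Lemma \ref{bounded} is scattered, hence each $A_{\lambda}\times_{\alpha^{\lambda}}G$ is scattered, and the identification $\left(A[\tau_{\Gamma}]\times_{\alpha}G\right)_{\lambda}\cong A_{\lambda}\times_{\alpha^{\lambda}}G$ together with Proposition \ref{scatt} concludes. The only difference is presentational: you make explicit the citation for the quotient step, which the paper leaves implicit.
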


\begin{proof}
It $b(A[\tau _{\Gamma }])\times _{\alpha |_{b(A[\tau _{\Gamma }])}}G$ is
scattered, then for each $\lambda \in \Lambda $, the $C^{\ast }$-algebra $%
\left( b(A[\tau _{\Gamma }])\times _{\alpha |_{b(A[\tau _{\Gamma
}])}}G\right) /\ker p_{\lambda }|_{b(A[\tau _{\Gamma }])}\times _{\alpha
|_{b(A[\tau _{\Gamma }])}}G$ is scattered and according to Lemma \ref%
{bounded}, for each $\lambda \in \Lambda $, the $C^{\ast }$-algebra $%
A_{\lambda }\times _{\alpha ^{\lambda }}G$ is scattered. From this fact, 
\cite[Corollary 1.3.7]{J} and Proposition \ref{scatt}, we deduce that $%
A\times _{\alpha }G$ is scattered.
\end{proof}

\begin{theorem}
Let $G$ be a compact abelian group and let $\alpha $ be an action of $G$ on
a locally $C^{\ast }$-algebra $A[\tau _{\Gamma }]$ such that for each $a\in
b(A[\tau _{\Gamma }])$, the map $g\mapsto $ $\alpha _{g}(a)\ $from $G$ to $%
b\left( A[\tau _{\Gamma }]\right) $ is continuous. If for some $\lambda \in
\Lambda $, the closed two sided $\ast $-ideal $\ker p_{\lambda }|_{b(A[\tau
_{\Gamma }])}$ of $b(A[\tau _{\Gamma }])$ is scattered, then the following
statements are equivalent:

\begin{enumerate}
\item $\left( b(A[\tau _{\Gamma }])\right) ^{\alpha |_{b(A[\tau _{\Gamma
}])}}$ is scattered;

\item $b(A[\tau _{\Gamma }])\times _{\alpha |_{b(A[\tau _{\Gamma }])}}G$ is
scattered;

\item $A\times _{\alpha }G$ is scattered;

\item $\left( A[\tau _{\Gamma }])\right) ^{\alpha }$ is scattered.
\end{enumerate}
\end{theorem}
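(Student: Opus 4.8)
The plan is to prove the chain $(1)\Rightarrow(2)\Rightarrow(3)\Rightarrow(4)\Rightarrow(1)$, noting that $(1)$ and $(2)$ are in fact equivalent on their own, so that all four statements sit in a single cycle and are therefore mutually equivalent. Throughout, write $B=b(A[\tau_\Gamma])$ for the $C^*$-algebra of bounded elements, which by hypothesis carries the genuine action $\beta=\alpha|_{B}$ of the compact abelian group $G$. The preliminary fact I would isolate is the identification $B^{\beta}=b\left((A[\tau_\Gamma])^\alpha\right)$: an element of $B$ fixed by every $\alpha_g$ is precisely a bounded element of $(A[\tau_\Gamma])^\alpha$, since the $C^*$-seminorms on $(A[\tau_\Gamma])^\alpha$ are the restrictions of the $p_\lambda$. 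This identification is what ties the ``bounded fixed-point algebra'' $B^\beta$ of statement $(1)$ to the ``fixed-point algebra'' $(A[\tau_\Gamma])^\alpha$ of statement $(4)$.

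Granting this, $(1)\Leftrightarrow(2)$ is immediate from \cite[Theorem 3.2]{K} applied to the $C^*$-algebra $B$ and the action $\beta$ of the compact abelian group $G$: $B\times_\beta G$ is scattered if and only if $B^\beta$ is scattered. The implication $(2)\Rightarrow(3)$ is exactly Proposition \ref{inv} (which internally consumes Lemma \ref{bounded}), and $(3)\Rightarrow(4)$ is exactly Proposition \ref{fixe}; neither of these uses the standing hypothesis that $\ker p_\lambda|_B$ is scattered for some $\lambda$.

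That hypothesis enters only in $(4)\Rightarrow(1)$, which I expect to be the main point. Using the identification above, it suffices to deduce that $b\left((A[\tau_\Gamma])^\alpha\right)$ is scattered from the assumption that the locally $C^*$-algebra $(A[\tau_\Gamma])^\alpha$ is scattered, and for this I would invoke Proposition \ref{bond}(2) with $(A[\tau_\Gamma])^\alpha$ in place of $A[\tau_\Gamma]$. Its two hypotheses are: $(A[\tau_\Gamma])^\alpha$ scattered, which is $(4)$; and, for some index, the kernel ideal $\ker p_\lambda|_{b((A[\tau_\Gamma])^\alpha)}$ scattered. Writing out the seminorm restrictions, this ideal equals $(A[\tau_\Gamma])^\alpha\cap\ker p_\lambda|_B$, a closed $*$-subalgebra of $\ker p_\lambda|_B$; since $\ker p_\lambda|_B$ is a scattered $C^*$-algebra by hypothesis and closed $*$-subalgebras of scattered $C^*$-algebras are scattered (as in Corollary \ref{sub}, via \cite[p.\ 677]{K1}), this ideal is scattered. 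Proposition \ref{bond}(2) then yields that $b\left((A[\tau_\Gamma])^\alpha\right)=B^\beta$ is scattered, which is $(1)$, closing the cycle.

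The only slightly delicate bookkeeping is the verification that $(A[\tau_\Gamma])^\alpha\cap\ker p_\lambda|_B$ really is the ideal $\ker p_\lambda|_{b((A[\tau_\Gamma])^\alpha)}$ demanded by Proposition \ref{bond}(2), and that the single index $\lambda$ for which $\ker p_\lambda|_B$ is scattered is available wherever it is needed; both reduce to writing $b((A[\tau_\Gamma])^\alpha)=(A[\tau_\Gamma])^\alpha\cap B$ and tracking the restricted seminorms. Everything else is a direct citation of the results already established, namely Propositions \ref{inv}, \ref{fixe}, \ref{bond} and \cite[Theorem 3.2]{K}.
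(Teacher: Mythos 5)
Your proposal is correct and follows essentially the same route as the paper: the cycle $(1)\Leftrightarrow(2)$ via \cite[Theorem 3.2]{K}, $(2)\Rightarrow(3)$ via Proposition \ref{inv}, $(3)\Rightarrow(4)$ via Proposition \ref{fixe}, and $(4)\Rightarrow(1)$ via the identification $\left( b(A[\tau _{\Gamma }])\right) ^{\alpha |_{b(A[\tau _{\Gamma }])}}=b\left( \left( A[\tau _{\Gamma }]\right) ^{\alpha }\right)$ together with Proposition \ref{bond}(2). Your extra bookkeeping (that $\ker p_{\lambda }|_{b((A[\tau _{\Gamma }])^{\alpha })}$ is a closed $\ast$-subalgebra of the scattered ideal $\ker p_{\lambda }|_{b(A[\tau _{\Gamma }])}$, hence scattered) just makes explicit what the paper's proof asserts in passing.
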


\begin{proof}
$\left( 1\right) \Leftrightarrow \left( 2\right) .$ By \cite[Theorem 3.2]{K}%
, $\left( b(A[\tau _{\Gamma }])\right) ^{\alpha |_{b(A[\tau _{\Gamma }])}}$
is scattered if and only if $b(A[\tau _{\Gamma }])\times _{\alpha
|_{b(A[\tau _{\Gamma }])}}G$ is scattered.

$(2)$ $\Longrightarrow (3).$ It follows from Proposition \ref{inv}.

$(3)$ $\Longrightarrow (4).$ It follows from Proposition \ref{fixe}.

$(4)$ $\Longrightarrow (1).$ If the closed two sided $\ast $-ideal $\ker
p_{\lambda }|_{b(A[\tau _{\Gamma }])}$ of $b(A[\tau _{\Gamma }])$ is
scattered, then $\ker p_{\lambda }|_{\left( b(A[\tau _{\Gamma }])\right)
^{\alpha |_{b(A[\tau _{\Gamma }])}}}$ is scattered and, by Proposition \ref%
{bond}, $\left( b(A[\tau _{\Gamma }])\right) ^{\alpha |_{b(A[\tau _{\Gamma
}])}}$ is scattered, since $\left( b(A[\tau _{\Gamma }])\right) ^{\alpha
|_{b(A[\tau _{\Gamma }])}}=b\left( \left( A[\tau _{\Gamma }])\right)
^{\alpha }\right) $.
\end{proof}

\begin{acknowledgement}
The author would like to thank Professor M. Haralampidou of the University
of Athens for valuable comments and suggestions during the preparation of
this paper.\textbf{\ }The author was partially supported by the grant of the
Romanian Ministry of Education, CNCS - UEFISCDI, project number
PN-II-ID-PCE-2012-4-0201.
\end{acknowledgement}

\bigskip

\bigskip

\end{document}